\newtheorem{theorem}{Theorem}
\newtheorem{remark}[theorem]{Remark}
\newtheorem{corollary}[theorem]{Corollary}
\newtheorem{definition}[theorem]{Definition}
\newtheorem{lemma}[theorem]{Lemma}
\numberwithin{equation}{section} \numberwithin{theorem}{section}
\renewcommand{\oddsidemargin}{5mm}
\begin{document}
\title
{Existence and Regularity \\For The Generalized Mean Curvature Flow Equations}
\author{RongLi Huang$^{1,2}$}
\address{1. School of Mathematical Sciences, Beijing Normal University,
Laboratory of Mathematics and Complex Systems, Ministry of Education,
Beijing 100875, People's Republic of China}
\email{hrl602@mail.bnu.edu.cn}
\address{2. Institute of Mathematics, Fudan University,
Shanghai 200433, People's Republic of China}
\email{huangronglijane@yahoo.cn}
\author{JiGuang Bao$^3$}
\address{3. Corresponding author. School of Mathematical Sciences, Beijing Normal University,
Beijing 100875, People's Republic of China}
\email{jgbao@bnu.edu.cn }
\date{}
\begin{abstract}
By making use of the approximation method, we obtain the existence and regularity of the viscosity
solutions for the generalized mean curvature flow. The asymptotic behavior of the flow is also considered.
In particular, the Dirichlet problem of the degenerate elliptic equation
$$ -|\nabla v|(\mathrm{div}(\frac{\nabla v}{|\nabla v|})+\nu)=0 $$
is solvable in viscosity sense.
\end{abstract}

\keywords{Generalized mean curvature flow; Viscosity solutions; Maximum principle}

\subjclass [2000]{35K55; 35K65}

\maketitle

\section{Introduction}

Let $n<6$, and $D\subset \mathbb{R}^{n+1}$ a bounded domain with a
$C^{2}$ boundary of mean curvature $H\geq 0$ with respect to its
outer unit normal. For $\Omega=\mathbb{R}^{n+1}\setminus D$ and a
nonnegative function $f\in C^{0,1}(\Omega)$, Bernhard Hein
considered the viscosity solutions of the inverse mean curvature
flow (cf. \cite{H})
\begin{equation}\label{1.1}
\left\{ \begin{aligned}
u_{t}-\mathrm{d}\mathrm{i}\mathrm{v}\biggl(\frac{\nabla u}{|\nabla u|}\biggr)+|\nabla u|&=0,
&(x,t)\in \Omega\times (0,+\infty), \\
u&=0, &(x,t)\in \partial D\times (0,+\infty),\\
u&=f(x), &(x,t)\in \Omega \times\{0\}.
\end{aligned} \right.
\end{equation}
Here $u_{t}=\frac{\partial u}{\partial t}, \nabla
 u=\mathrm{g}\mathrm{r}\mathrm{a}\mathrm{d}$ $u$,
 $\mathrm{d}\mathrm{i}\mathrm{v}$ is the divergence operator in
 $\mathbb{R}^{n+1}$.
 He proved that there exists a unique nonnegative weak solution which satisfies (\ref{1.1}).
 And there is a positive constant $C=C(n,D,f)$ such that for $x\in\Omega$ and all $t>0$,
\begin{equation*}
|\nabla u|\leq C,\qquad\quad
-\frac{\sqrt{(n+1)C}}{\sqrt{t}}-2C\leq \frac{\partial u}{\partial
t}\leq \frac{\sqrt{(n+1)C}}{\sqrt{t}}+C.
\end{equation*}
Y.Giga, M.Ohuma and M.Sato  studied the following Neumann problem (cf. \cite{GOS})
\begin{equation}\label{1.2}
\left\{ \begin{aligned}
u_{t}-|\nabla u|\mathrm{d}\mathrm{i}\mathrm{v}\biggl(\frac{\nabla u}{|\nabla u|}\biggr)&=0,
&(x,t)\in D\times (0,+\infty),  \\
\frac{\partial u}{\partial \gamma}&=0, &(x,t)\in \partial D\times (0,+\infty), \\
u&=f(x), &x\in D\times\{0\},
\end{aligned} \right.
\end{equation}
where $\gamma$ is the outer unit normal of $\partial D$ and $f(x)\in C^{2}(\overline {D})$.
They discovered some interesting  properties of the solution $u(x,t)$ (see
Theorem 1.1 in \cite{GOS}) which satisfies (\ref{1.2}) in viscosity sense.

Inspired from \cite{H} and \cite{GOS}, we investigate  the global properties of
solutions of the generalized mean curvature flow equations
\begin{equation}\label{1.3}
u_{t}-|\nabla u|\left(\mathrm{div}\biggl(\frac{\nabla
u}{|\nabla u|}\biggr)+\nu\right)=0, \quad (x,t)\in D\times(0,+\infty),
\end{equation}
where $\nu$ is a constant.  The equation (\ref{1.3}) has a geometric significance because
$\gamma$-level surface $\Gamma(t)$ of $u$ moves by its mean
curvature when $\nu=0$ provided $\nabla u$ does not vanish on
$\Gamma(t)$. Such a motion of surfaces has been studied by many
authors in various conditions (cf. \cite{CGG},\cite{SZ},\cite{ES1},\cite{ES2},\cite{ES3},\cite{ES4}). However,
the uniformly gradient estimates for solutions of (\ref{1.3}) are little
known and crucial for studying the global
properties of viscosity solutions.

 In the present paper we consider the initial and boundary value problem
\begin{equation}\label{1.4}
\left\{ \begin{aligned}
u_{t}-|\nabla u|\left(\mathrm{d}\mathrm{i}\mathrm{v}\biggl(\frac{\nabla u}{|\nabla u|}\biggr)+\nu\right)&=0,
&(x,t)\in D\times (0,+\infty),  \\
u&=h(x),  &(x,t)\in \partial D\times [0,+\infty), \\
u&=g(x),  &x\in D\times\{0\}.
\end{aligned} \right.
\end{equation}
Here $h(x)$ and $g(x)$ are the given functions on $\overline D$.

Our main purposes are to show the existence and regularity of the viscosity
solutions for (\ref{1.4}), study their asymptotic behavior, and prove that
$u(x,t)$ converges to a solution of the Dirichlet problem of degenerate elliptic equation
\begin{equation}\label{1.5}
\left\{ \begin{aligned} -|\nabla v|\left(\mathrm{div}\biggl(\frac{\nabla
v}{|\nabla v|}\biggr)+\nu\right)&=0, &x\in D, \\
v&=h(x), &x\in \partial D,
\end{aligned} \right.
\end{equation}
as $t\rightarrow +\infty$. The solvability of (\ref{1.5}) doses not seem to be easily found in
 literature as far as we knew.

Quite naturally, we always use the following notations
$$ \varphi_{i}=\displaystyle\frac{\partial\varphi}{\partial x_{i}}, \ \ \ \
\varphi_{ij}=\displaystyle\frac{\partial^{2}\varphi}{\partial x_{i}\partial x_{j}}. $$
Firstly we introduce the definition of viscosity solutions.

\begin{definition}\label{de 2.05}
Suppose that $u(x,t)$ is a function in
$C(\overline {D}\!\times\![0,+\infty))$ and satisfies the initial and boundary conditions
of (1.4). If $\varphi\in C^{\infty}(D\times(0,+\infty))$, $(x,t)\in {\Theta}\subset D\times(0,+\infty)$
and $\Theta$ is a bounded open set, which
satisfies
\begin{equation*}
(u-\varphi)(x,t)=\max_{\overline{\Theta}}(u-\varphi),
\end{equation*}
and at $(x,t)$ that
\begin{equation*}
\varphi_{t}\leq \left(\delta_{ij}-\frac{\varphi_{i}\varphi_{j}}{|\nabla
\varphi|^{2}}\right)\varphi_{ij}+\nu|\nabla\varphi| ,\quad
|\nabla\varphi|\neq 0.
\end{equation*}
Or  there exists $\eta=(\eta_{1},\eta_{2},\cdots,\eta_{n+1})$ with
$|\eta|\leq 1$ at $(x,t)$ such that
\begin{equation*}
\varphi_{t}\leq (\delta_{ij}-\eta_{i}\eta_{j})\varphi_{ij} ,\quad
|\nabla\varphi|= 0.
\end{equation*}
Then $u(x,t)$ is viscosity sub-solution of (\ref{1.4}).
\end{definition}

\begin{definition}\label{de 2.05}
Suppose that $u(x,t)$ is a  function in
$C(\overline {D}\!\times\![0,+\infty))$ and satisfies the initial and boundary conditions
of (\ref{1.4}). If $\varphi\in C^{\infty}(D\times(0,+\infty))$,
$(x,t)\in {\Theta}\subset D\times(0,+\infty)$ and $\Theta$ is a bounded open set, which
satisfies
\begin{equation*}
(u-\varphi)(x,t)=\min_{\overline{\Theta}}(u-\varphi),
\end{equation*}
and at $(x,t)$ that
\begin{equation*}
\varphi_{t}\geq \left(\delta_{ij}-\frac{\varphi_{i}\varphi_{j}}{|\nabla
\varphi|^{2}}\right)\varphi_{ij}+\nu|\nabla\varphi| ,\quad |\nabla\varphi|\neq 0.
\end{equation*}
Or  there exists $\eta=(\eta_{1},\eta_{2},\cdots,\eta_{n+1})$ with
$|\eta|\leq 1$ at $(x,t)$ such that
\begin{equation*}
\varphi_{t}\geq (\delta_{ij}-\eta_{i}\eta_{j})\varphi_{ij} ,\quad |\nabla\varphi|= 0.
\end{equation*}
Then $u(x,t)$ is viscosity super-solution of (\ref{1.4}).
\end{definition}

\begin{definition}\label{de 2.05}
If $u(x,t)$ is a viscosity sub-solution and also is a viscosity
super-solution of (\ref{1.4}), then $u(x,t)$ is a viscosity solution of (\ref{1.4}).
\end{definition}

Let us  fix $h(x)=g(x)$ on $\partial D$, $h(x)\in
C^{2}(\partial D)$, $g(x)\in C^{2}(\overline {D})$ and suppose that $D$ is a smooth strictly convex bounded domain in
 $\mathbb{R}^{n+1}$, and $|\nu|<\frac{nH_{0}}{n+1},$
where $H_{0}$ is the positive lower bound of the mean curvature of $\partial D$.

One of the main results in this paper is  the existence and regularity of viscosity solutions of (\ref{1.4}).

\begin{theorem}\label{1.4}
There exists a unique function $u(x,t)$ which satisfies (\ref{1.4}) in viscosity sense and
\begin{equation}\label{1.6}
u\in C(\overline {D}\times [0,+\infty)),\quad u_{t}\in L^{\infty}(D\times
[0,+\infty)),\quad \nabla u\in L^{\infty}(D\times [0,+\infty)),
\end{equation}
\begin{equation}\label{1.7}
\|u\|_{L^{\infty}(D\times[0,+\infty))}+\|\nabla
u\|_{L^{\infty}(D\times[0,+\infty))}+\|u_{t}\|_{L^{\infty}(D\times[0,+\infty))}\leq C,
\end{equation}
\begin{equation}\label{1.8}
\int^{+\infty}_{0}\int_{D}|u_{t}|^{2}dxdt\leq C,
\end{equation}
where the constant $C$ depends only on $n$, $\nu$, $D$, $\|h\|_{C^2(\partial D)}$ and
$\|g\|_{C^2(\overline {D})}$.
\end{theorem}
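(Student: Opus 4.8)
The plan is to realize the viscosity solution of (\ref{1.4}) as the locally uniform limit of classical solutions of a family of uniformly parabolic regularizations, and to derive every bound in (\ref{1.6})--(\ref{1.8}) at the regularized level, with constants that are independent both of the regularization parameter and of the time horizon. Concretely, for $\epsilon\in(0,1)$ I would replace the singular operator by
\[
u^\epsilon_t=\Bigl(\delta_{ij}-\frac{u^\epsilon_i u^\epsilon_j}{|\nabla u^\epsilon|^2+\epsilon^2}\Bigr)u^\epsilon_{ij}+\nu\sqrt{|\nabla u^\epsilon|^2+\epsilon^2},
\]
keeping the boundary datum $h$ and using a smoothed, compatible initial datum $g^\epsilon\to g$ in $C^2(\overline D)$. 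For each fixed $\epsilon$ this is a smooth, uniformly parabolic quasilinear equation, so the classical theory of Ladyzhenskaya--Solonnikov--Ural'tseva gives a smooth solution on short time intervals; the uniform estimates below prevent gradient blow-up and therefore upgrade this to existence for all $t>0$.

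The a priori estimates would be established in the following order, each via the maximum principle. First, a $C^0$ bound $\|u^\epsilon\|_\infty\le C$ follows by comparing $u^\epsilon$ with suitable constant super- and sub-solutions. Second, since $h$ is time-independent we have $u^\epsilon_t=0$ on $\partial D$, while at $t=0$ the equation expresses $u^\epsilon_t(\cdot,0)$ through $g^\epsilon$, so $|u^\epsilon_t(\cdot,0)|\le C\|g\|_{C^2(\overline D)}$; differentiating the equation in $t$ shows $v=u^\epsilon_t$ solves a linear parabolic equation, and the maximum principle yields $\|u^\epsilon_t\|_\infty\le C$. Third --- and this is the heart of the matter --- I would prove the boundary gradient estimate by constructing a time-independent barrier of the form $h+\psi(d(x))$, where $d(x)=\mathrm{dist}(x,\partial D)$: strict convexity of $D$ forces the level sets of $d$ near $\partial D$ to have mean curvature bounded below by $H_0$, and the hypothesis $|\nu|<\tfrac{nH_0}{n+1}$ is exactly what permits a choice of $\psi$ making the barrier a stationary super-/sub-solution. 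A stationary supersolution is automatically a parabolic supersolution, and since it agrees with $u^\epsilon$ on $\partial D$, the comparison principle in a boundary collar (with the $C^0$ bound used to dominate on the inner face) controls $\partial u^\epsilon/\partial\gamma$ on $\partial D$, uniformly in $\epsilon$ and $t$. Fourth, the global gradient bound follows because $|\nabla u^\epsilon|^2$ satisfies a parabolic differential inequality of maximum-principle type, so its maximum over $\overline D\times[0,T]$ is attained on the parabolic boundary, reducing the interior estimate to the boundary estimate and to $\|g\|_{C^1(\overline D)}$.

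For the energy estimate (\ref{1.8}), I would exploit the gradient-flow structure through the Lyapunov functional $E^\epsilon(t)=\int_D(\sqrt{|\nabla u^\epsilon|^2+\epsilon^2}-\nu u^\epsilon)\,dx$. Differentiating and integrating by parts --- the boundary term vanishing because $u^\epsilon_t=0$ on $\partial D$ --- and using that the regularized operator equals $\sqrt{|\nabla u^\epsilon|^2+\epsilon^2}\bigl(\mathrm{div}(\nabla u^\epsilon/\sqrt{|\nabla u^\epsilon|^2+\epsilon^2})+\nu\bigr)$, one obtains
\[
\frac{d}{dt}E^\epsilon(t)=-\int_D\frac{|u^\epsilon_t|^2}{\sqrt{|\nabla u^\epsilon|^2+\epsilon^2}}\,dx.
\]
Since $\sqrt{|\nabla u^\epsilon|^2+\epsilon^2}$ is bounded above by the gradient estimate and $E^\epsilon$ is bounded in terms of the $C^0$ and gradient bounds, integrating in time gives $\int_0^{+\infty}\!\!\int_D|u^\epsilon_t|^2\,dx\,dt\le C$ uniformly in $\epsilon$. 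Finally, the uniform Lipschitz control in $x$ and $t$ furnishes equicontinuity, so Arzel\`a--Ascoli extracts a subsequence $u^\epsilon\to u$ locally uniformly on $\overline D\times[0,+\infty)$; the standard stability of viscosity solutions under uniform convergence, together with convergence of the regularized operators to that of (\ref{1.4}), shows $u$ is a viscosity solution, and the bounds pass to the limit by lower semicontinuity of the norms and Fatou's lemma. Uniqueness then follows from the comparison principle for viscosity solutions of the geometric equation, in the spirit of Chen--Giga--Goto and Evans--Spruck, adapted to the constant forcing $\nu$ and the Dirichlet condition.

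I expect the boundary gradient estimate to be the principal obstacle: the barrier must be engineered so that the mean curvature generated by its level sets beats the $\nu$-forcing, and it is precisely the threshold $|\nu|<\tfrac{nH_0}{n+1}$ that guarantees this is possible. Making this construction time-independent --- so that the bound, and hence the entire scheme, is global in $t$ rather than merely local --- is the delicate point on which the uniformity of all the subsequent estimates rests.
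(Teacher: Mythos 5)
Your overall strategy coincides with the paper's: the same $\epsilon$-regularization, the same cascade of a priori estimates ($C^{0}$ bound, boundary gradient barrier built from the distance function and convexity, interior gradient bound by the maximum principle applied to derivatives of the equation, $u_{t}$ bound by differentiating in $t$, energy estimate), all uniform in $\epsilon$ and $T$, then Arzel\`a--Ascoli plus the stability theorem for viscosity solutions, with uniqueness from the comparison principle. Your Lyapunov functional $E^{\epsilon}$ is a tidy repackaging of the paper's computation (the paper integrates the term $\nu\int_{D}u^{\epsilon}_{t}\,dx$ in time and bounds it by the $C^{0}$ estimate, which is the same identity), and invoking short-time existence plus a priori bounds instead of the paper's Leray--Schauder continuity argument is a legitimate variant.

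There is, however, a genuine gap in your first step. When $\nu\neq 0$, constants are \emph{not} super- or sub-solutions of the regularized equation: for $u\equiv M$ one has
\begin{equation*}
u_{t}-\sqrt{\epsilon^{2}+|\nabla u|^{2}}\left(\mathrm{div}\biggl(\frac{\nabla u}{\sqrt{\epsilon^{2}+|\nabla u|^{2}}}\biggr)+\nu\right)=-\epsilon\nu\neq 0,
\end{equation*}
so for $\nu>0$ no constant is a supersolution (and for $\nu<0$ no constant is a subsolution). The space-independent exact solutions are $M+\epsilon\nu t$, which grow linearly in $t$; since your boundary gradient estimate, energy estimate and $u_{t}$ estimate all feed on the $C^{0}$ bound, this loss of uniformity in $T$ contaminates the entire scheme, whereas (1.7)--(1.8) must be global in time. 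The paper closes exactly this hole by comparing $u^{\epsilon}$ with $v^{\epsilon}+\kappa$, where $v^{\epsilon}$ solves the \emph{stationary} regularized problem with constant boundary data; the existence of $v^{\epsilon}$ is supplied by Theorem 16.10 of Gilbarg--Trudinger, whose Serrin-type solvability condition is precisely the hypothesis $|\nu|<\frac{nH_{0}}{n+1}$. This also corrects your attribution of that hypothesis: the boundary barrier $\pm\lambda d(x)$ only needs the weaker inequality $|\nu|<nH_{0}$ (the paper's (2.15) gives $\mathfrak{L}\psi^{+}\geq\lambda(nH_{0}-|\nu|)-C$), while the sharper threshold $\frac{n}{n+1}$ is what makes the $C^{0}$ barrier exist at all.
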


As an application of Theorem \ref{1.4}, we have

\begin{corollary}\label{1.5}
 Suppose $u(x,t)$ is the  viscosity solution of
(1.4). Then there exists a function $v(x)$ which satisfies
$v(x)\in C(\overline {D}), \nabla v\in L^{\infty}(D)$, such that
\begin{equation}\label{1.9}
\lim_{t\rightarrow +\infty}u(x,t)=v(x),\quad \mathrm{in} \quad
C(\overline {D}).
\end{equation}
and $v(x)$ satisfies (\ref{1.5}) in viscosity sense.
\end{corollary}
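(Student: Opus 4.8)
The plan is to combine the uniform estimates \eqref{1.7} with the energy bound \eqref{1.8} to extract a \emph{stationary} limit, and then to use the comparison principle (maximum principle) underlying Theorem \ref{1.4} to promote subsequential convergence to full convergence.

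First I would fix an arbitrary sequence $t_k\to+\infty$ and introduce the time-translated functions $u_k(x,\tau):=u(x,t_k+\tau)$ on $\overline D\times[0,T]$. Since \eqref{1.3} is autonomous in $t$, each $u_k$ is again a viscosity solution of \eqref{1.3} with boundary value $h$, and by \eqref{1.7} the family $\{u_k\}$ is uniformly bounded and equi-Lipschitz in $(x,\tau)$. Arzel\`a--Ascoli then yields a subsequence with $u_k\to w$ locally uniformly on $\overline D\times[0,+\infty)$; the limit $w$ is Lipschitz with the same constant and equals $h$ on $\partial D$. Setting $v(x):=w(x,0)$, the uniform Lipschitz bound passes to $v$, so $v\in C(\overline D)$ and $\nabla v\in L^\infty(D)$, as required.

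Next I would show $w$ is independent of $\tau$. For $0\le\tau_1<\tau_2\le T$, Cauchy--Schwarz gives $\int_D|u_k(x,\tau_2)-u_k(x,\tau_1)|\,dx\le |D\times(\tau_1,\tau_2)|^{1/2}\big(\int_{t_k}^{t_k+T}\!\int_D u_t^2\,dx\,dt\big)^{1/2}$, and the right-hand side tends to $0$ because \eqref{1.8} forces the tail of $\int\!\int u_t^2$ to vanish. Hence $w(\cdot,\tau_2)=w(\cdot,\tau_1)$, so $w(x,\tau)\equiv v(x)$. By the stability of viscosity solutions under local uniform convergence, $w$ is a viscosity solution of \eqref{1.3}; testing this time-independent solution against test functions of the form $\varphi(x,t)=\psi(x)$, for which $\varphi_t=0$, then shows that $v$ is a viscosity solution of the elliptic problem \eqref{1.5}. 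This is the step I expect to be the main obstacle, since $\mathrm{div}(\nabla u/|\nabla u|)$ is singular where $\nabla u=0$ and one must verify that the $|\eta|\le 1$ alternative in Definitions \ref{de 2.05} survives the passage to the limit.

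Finally I would upgrade to the full limit using comparison. Because \eqref{1.3} is invariant under adding constants and $v$ is itself a time-independent solution with boundary value $h$, setting $M_k:=\|u(\cdot,t_k)-v\|_{L^\infty(D)}$ the functions $v\pm M_k$ are solutions of \eqref{1.3} that dominate, respectively are dominated by, $u(\cdot,t_k+\cdot)$ on the parabolic boundary of $D\times(t_k,+\infty)$: on $\{t=t_k\}$ this is the definition of $M_k$, and on $\partial D$ it holds since $u=v=h$ there. The comparison principle then gives $\|u(\cdot,t)-v\|_{L^\infty(D)}\le M_k$ for all $t\ge t_k$. Since $M_k\to 0$ along the chosen subsequence, this forces $u(\cdot,t)\to v$ uniformly on $\overline D$ as $t\to+\infty$, which is \eqref{1.9}, and at the same time shows the limit is independent of the subsequence, completing the proof.
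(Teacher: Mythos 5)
Your proposal follows the same core route as the paper's proof: translate in time ($u_k(x,\tau)=u(x,t_k+\tau)$), use the uniform bounds \eqref{1.7} with Arzel\`a--Ascoli to extract a uniformly convergent subsequence, use the energy bound \eqref{1.8} to show the limit is time-independent, and pass to the limit in the equation by the stability theorem for viscosity solutions (the paper cites Theorem 2.4 of \cite{CGG} here; your reduction of a time-independent parabolic viscosity solution to an elliptic viscosity solution of \eqref{1.5} is the same observation, and it does go through with the paper's definition since a spatial maximum of $v-\psi$ is an interior space-time maximum of $w-\psi$ with $\psi_t=0$). The only cosmetic difference in this part is that the paper deduces time-independence from the weak convergence $u_{kt}\rightharpoonup 0$ in $L^{2}$ via a distributional argument, while you use the fundamental theorem of calculus plus Cauchy--Schwarz; these are equivalent. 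Where you genuinely go beyond the paper is the final step. The paper stops once the subsequential limit $v$ is produced, so as written it proves convergence only along a sequence of integer time translates, not the full limit \eqref{1.9} asserted in the statement; and it cannot appeal to uniqueness of the limiting problem, since uniqueness for \eqref{1.5} is acknowledged to be open in the remark following the corollary. Your comparison argument --- observing that $v\pm M_k$, with $M_k=\|u(\cdot,t_k)-v\|_{L^{\infty}(D)}$, are time-independent viscosity super- and sub-solutions of \eqref{1.3} agreeing with $u$ on $\partial D$, and invoking the parabolic comparison principle (available in this setting as Theorem 1.7 of \cite{IS}, the same result the paper uses in its proof of Theorem \ref{1.7}) to get $\|u(\cdot,t)-v\|_{L^{\infty}(D)}\leq M_k$ for all $t\geq t_k$ --- closes exactly this gap: since $M_k\to 0$ along the subsequence, the full limit follows and is independent of the subsequence chosen. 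So your proof is correct and, on the point of full convergence, more complete than the paper's own argument.
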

\begin{remark}\label{de 2.05}
By Corollary \ref{1.5}  the Dirichlet problem (\ref{1.5}) is solvable. But
we don't know  whether the viscosity solutions of (\ref{1.5}) is unique.
\end{remark}

The second result of this paper is the Liouville-type property of the viscosity solutions.
Suppose  that $D'$ is a smooth convex bounded  domain in $\mathbb{R}^{n}$, such that
\begin{equation*}
D\cap \{(x', x_{n+1})\in \mathbb{R}^{n+1}\mid |x_{n+1}|<m+1\}=D'\times (-m-1,m+1),
\end{equation*}
where $x'=(x_1,\cdots,x_n)$ and $m$ is a positive constant.

\begin{theorem}\label{1.7}
Let $\nu\geq 0$  and suppose that $g(x',x_{n+1})$ is a non-decreasing function of $x_{n+1}$ which satisfies
\begin{equation}\label{1.10}
g(x',x_{n+1})=\lambda,\quad x_{n+1}\geq m ,
\end{equation}
where $\lambda$ is a constant. Then the viscosity solution $u(x,t)$ in $C(\overline {D}\!\times\![0,+\infty))$
of (1.4) satisfies
\begin{equation}\label{1.11}
u(x',x_{n+1},t)=\lambda,\quad x_{n+1}\geq m .
\end{equation}
\end{theorem}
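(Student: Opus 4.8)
The plan is to trap $u$ between the constant $\lambda$ from above and a monotone, time‑independent barrier from below, and then to conclude via the comparison principle that already underlies the uniqueness assertion in Theorem \ref{1.4}.

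First I would observe that $g\le\lambda$ on $\overline D$. Because $D$ is convex and coincides with $D'\times(-m-1,m+1)$ inside the slab, every vertical line that meets $D$ does so in an interval containing the height $x_{n+1}=m$, where $g=\lambda$; since $g$ is non-decreasing in $x_{n+1}$ along this interval, $g(x',x_{n+1})\le\lambda$ throughout $\overline D$. As $h=g$ on $\partial D$, also $h\le\lambda$. The constant $\lambda$ is a viscosity super-solution of (\ref{1.4}) which dominates both the initial and the lateral data of $u$, so the comparison principle gives
\begin{equation*}
u(x,t)\le\lambda,\qquad(x,t)\in\overline D\times[0,+\infty).
\end{equation*}

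For the reverse inequality on $\{x_{n+1}\ge m\}$ I would construct a stationary sub-solution depending only on $x_{n+1}$. Put
\begin{equation*}
\phi(x_{n+1})=\min\{g(y',y_{n+1}):(y',y_{n+1})\in\overline D,\ y_{n+1}\ge x_{n+1}\}.
\end{equation*}
Then $\phi$ is non-decreasing and, by the uniform continuity of $g$, continuous; moreover $\phi\le g$ on $\overline D$ and $\phi\equiv\lambda$ for $x_{n+1}\ge m$, since there the minimum runs over a set on which $g\equiv\lambda$. I would then verify that $\phi$ is a viscosity sub-solution of (\ref{1.4}). If $\varphi\in C^{\infty}$ and $\phi-\varphi$ attains a local maximum at an interior point, then $\varphi_t=0$, $\nabla_{x'}\varphi=0$, and $\sum_{i=1}^{n}\varphi_{ii}\ge0$ there; moreover $\varphi_{n+1}\ge0$, because $\varphi_{n+1}$ lies in the superdifferential of the non-decreasing $\phi$. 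Hence, when $\nabla\varphi\ne0$ we have $\nabla\varphi=\varphi_{n+1}e_{n+1}$ and
\begin{equation*}
\Bigl(\delta_{ij}-\frac{\varphi_i\varphi_j}{|\nabla\varphi|^2}\Bigr)\varphi_{ij}+\nu|\nabla\varphi|=\sum_{i=1}^{n}\varphi_{ii}+\nu\,\varphi_{n+1}\ge0=\varphi_t,
\end{equation*}
while if $\nabla\varphi=0$ the choice $\eta=e_{n+1}$ gives $(\delta_{ij}-\eta_i\eta_j)\varphi_{ij}=\sum_{i=1}^{n}\varphi_{ii}\ge0=\varphi_t$. This is exactly where the hypothesis $\nu\ge0$ enters, and it is the only place it is needed.

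Finally I would compare $\phi$ with $u$. On $\{t=0\}$ we have $\phi\le g=u$, and on $\partial D\times[0,+\infty)$ we have $\phi\le g=h=u$; since $\phi$ is a sub-solution and $u$ a super-solution of (\ref{1.4}), the comparison principle yields $\phi\le u$ on $\overline D\times[0,+\infty)$. For $x_{n+1}\ge m$ this reads $\lambda=\phi\le u\le\lambda$, which is precisely (\ref{1.11}). The one genuinely delicate point is the comparison step itself: one must ensure that the comparison principle for this degenerate parabolic operator with Dirichlet data applies between the merely Lipschitz barrier $\phi$ and the solution $u$. Granting the comparison machinery already required for the uniqueness in Theorem \ref{1.4}, the remainder is the elementary barrier estimate above.
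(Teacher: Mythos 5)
Your proposal is correct and takes essentially the same route as the paper: trap $u$ between barriers depending only on $x_{n+1}$ --- the constant $\lambda$ from above (the paper's $u^{+}=g^{+}(x_{n+1}+\nu t)$ with $g^{+}\equiv\lambda$ is exactly this constant) and a monotone non-decreasing function of $x_{n+1}$ from below --- verify the viscosity sub/super-solution property by the same $\nabla\varphi=0$ versus $\nabla\varphi\neq 0$ case analysis (with $\nu\geq 0$ entering precisely where you say), and conclude by the Ishii--Souganidis comparison principle. The only real difference is one of execution: your lower barrier $\phi$ is the Lipschitz partial minimum of $g$, equal to $\lambda$ exactly on $\{x_{n+1}\geq m\}$, so you get (\ref{1.11}) directly, whereas the paper uses a smoothed steep ramp $g^{-}$ equal to $\lambda$ only for $x_{n+1}\geq m+\delta$ and must let $\delta\rightarrow 0$ at the end.
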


In the next section we construct an approximation problem of (\ref{1.4}) and establish uniform estimates for their classical solutions.
In the last section we present the proof of the main results.

\section{Preliminary Estimates}
Consider the  approximate problem of (1.4) (with $\epsilon \in(0,1)$)
in a smooth strictly convex bounded domain in $\mathbb{R}^{n+1}$
\begin{equation}\label{2.1}
\left\{\begin{aligned}
u_{t}-\sqrt{\epsilon^{2}+|\nabla
u|^{2}}\cdot\left(\mathrm{div}\biggl(\frac{\nabla u}{\sqrt{\epsilon^{2}+|\nabla u|^{2}}}\biggr)+\nu\right)&=0,
&(x,t)\in D\times (0,+\infty), \\
u&=h(x), &(x,t)\in \partial D\times [0,+\infty),\\
u&=g(x), &x\in D\times\{0\}.
\end{aligned} \right.
\end{equation}
We want to use the continuity method to prove the solvability of
(\ref{2.1}) and then obtain the estimates similarly to (\ref{1.7}) and (\ref{1.8}).

In order to solve (\ref{2.1}) we use the following
form of fixed point theorem (cf. \cite{GT}).

\begin{lemma}$\mathrm{(Leray-Schauder)}$\label{2.1}
Suppose that $\mathfrak{B}$ is a  Banach space, $\chi(b,\sigma)$ is
a map from $\mathfrak{B}\times[0,1]$ to $\mathfrak{B}$. If $\chi$
satisfies

(1)\, $\chi$ is continuous and compact.

(2)\, $\chi(b,0)=0,$ $\forall b\in \mathfrak{B}.$

(3)\, There exists constant $C>0$ such that
$$\|b_{0}\|_{\mathfrak{B}}\leq C,\quad \forall b_{0}\in\{b\in
\mathfrak{B}| \exists \sigma\in [0,1],\quad b=\chi(b,\sigma)\}.$$
Then there exists $b_{0}\in \mathfrak{B}$ such that
$\chi(b_{0},1)=b_{0}$.
\end{lemma}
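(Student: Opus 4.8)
The plan is to deduce this from Leray--Schauder degree theory for compact perturbations of the identity. Since $\chi$ is continuous and compact, for each fixed $\sigma\in[0,1]$ the map $I-\chi(\cdot,\sigma)$ is a compact perturbation of the identity on $\mathfrak{B}$, so the Leray--Schauder degree $d(I-\chi(\cdot,\sigma),\Omega,0)$ is available on any bounded open set $\Omega$ on whose boundary $b-\chi(b,\sigma)$ does not vanish. I would fix a radius $R>C$, with $C$ the a priori bound from hypothesis (3), and take $\Omega=\{b\in\mathfrak{B}\mid \|b\|_{\mathfrak{B}}<R\}$.

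First I would verify admissibility along the whole homotopy. If some $b\in\partial\Omega$ satisfied $b=\chi(b,\sigma)$ for a $\sigma\in[0,1]$, then $b$ would lie in the set described in hypothesis (3), forcing $\|b\|_{\mathfrak{B}}\leq C<R$, which contradicts $\|b\|_{\mathfrak{B}}=R$. Hence $b-\chi(b,\sigma)\neq 0$ for all $b\in\partial\Omega$ and all $\sigma$, so the degree $d(I-\chi(\cdot,\sigma),\Omega,0)$ is well defined for every $\sigma\in[0,1]$.

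Next I would invoke homotopy invariance. Because $(b,\sigma)\mapsto\chi(b,\sigma)$ is a compact homotopy that carries no zero of $I-\chi(\cdot,\sigma)$ onto $\partial\Omega$, the function $\sigma\mapsto d(I-\chi(\cdot,\sigma),\Omega,0)$ is constant on $[0,1]$. At $\sigma=0$ property (2) gives $\chi(\cdot,0)=0$, so $I-\chi(\cdot,0)=I$, and normalization together with $0\in\Omega$ yields $d(I,\Omega,0)=1$. Therefore $d(I-\chi(\cdot,1),\Omega,0)=1\neq 0$, and the solution property of a nonvanishing degree produces some $b_{0}\in\Omega$ with $b_{0}-\chi(b_{0},1)=0$, i.e. $\chi(b_{0},1)=b_{0}$, as claimed.

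The main obstacle is not the short argument above but the degree machinery it presupposes: one must approximate the compact map $\chi(\cdot,\sigma)$ uniformly on bounded sets by finite-rank maps via a Schauder projection, define the degree on the resulting finite-dimensional subspaces through the Brouwer degree, and check that the value is independent of the approximating data and satisfies normalization, additivity, and homotopy invariance. Once this standard but nontrivial construction is in place the theorem is immediate; alternatively, I would bypass the general theory and run the Schauder-projection and Brouwer fixed-point argument directly on the equation $b=\chi(b,\sigma)$, extracting a convergent subsequence of finite-dimensional fixed points by the compactness of $\chi$.
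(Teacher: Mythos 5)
Your degree-theoretic argument is sound, but note first that the paper itself offers no proof of this lemma: it is stated as a known form of the Leray--Schauder fixed point theorem with a citation to Gilbarg--Trudinger \cite{GT} (Theorem 11.6 there), and the paper's actual work consists in verifying its three hypotheses for the map $\chi$ defined through the linear problem (2.2). Within the framework you chose, every step checks out: hypothesis (3) makes the ball $\Omega=\{b\in\mathfrak{B}:\|b\|_{\mathfrak{B}}<R\}$ with $R>C$ admissible for every $\sigma$, hypothesis (1) makes $(b,\sigma)\mapsto\chi(b,\sigma)$ a compact homotopy so that the degree $d(I-\chi(\cdot,\sigma),\Omega,0)$ is constant in $\sigma$, hypothesis (2) together with normalization gives the value $1$ at $\sigma=0$, and the solution property of a nonzero degree produces the fixed point of $\chi(\cdot,1)$.

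Your route is, however, genuinely different from that of the cited source. Gilbarg--Trudinger deliberately avoid degree theory: after rescaling so that the a priori bound equals $1$, they build from $\chi$ a single continuous compact map of the closed unit ball and apply Schauder's fixed point theorem (itself obtained there from Schauder projections and Brouwer's theorem); the key device is to let the homotopy parameter depend on the norm $\|b\|$, so that condition (2) controls the modified map near the boundary sphere while condition (3) rules out spurious fixed points. Your proof buys brevity and conceptual clarity, at the cost of presupposing the construction of the Leray--Schauder degree with its normalization, homotopy invariance and solution properties --- machinery heavier than the statement itself, as you acknowledge. Your fallback suggestion, running the Schauder-projection/Brouwer argument directly on $b=\chi(b,\sigma)$, is in spirit exactly the elementary argument of \cite{GT}; if you carry it out, the delicate point is how to convert the bound (3), which concerns fixed points of the homotopy $\chi(\cdot,\sigma)$ and not of the scalar multiples $\sigma\chi(\cdot,1)$, into the self-mapping or boundary behavior that Brouwer/Schauder arguments require; that is precisely what the norm-dependent reparametrization of $\sigma$ accomplishes.
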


For any $T>0$, we define
$$\mathfrak{B}=\{u| u\in C(\overline {D}\times[0, T)),\quad \nabla u\in C(\overline {D}\times[0, T))\},
\quad D_{T}=D\times [0,T).$$
From the theory on linear parabolic equation (cf. \cite{L}) and for any
$\tilde{u}\in \mathfrak{B},$ $\sigma\in [0,1]$, there exists a
unique function $u$, where $u\in \mathfrak{B}$, $u\in
W_{p}^{2,1}(D_{T})$ with any $p>0 $ such that $u$ satisfies
\begin{equation}\label{2.2}
\left\{ \begin{aligned}
u_{t}-\left(\delta_{ij}-\sigma^{2}\frac{\tilde{u}_{i}\tilde{u}_{j}}{\epsilon^{2}+\sigma^{2}|\nabla
\tilde{u}|^{2}}\right)u_{ij} &=\sigma\nu \sqrt{\epsilon^{2}+\sigma^{2}|\nabla \tilde{u}|^{2}},
&(x,t)\in D\times (0, T), \\
u&=\sigma h(x), &(x,t)\in \partial D\times [0, T),\\
u&=\sigma g(x), &x\in D\times\{0\}.
\end{aligned} \right.
\end{equation}
By (\ref{2.2}) we can define a map  from $\mathfrak{B}\times[0,1]$
to $\mathfrak{B}$ and denote $u=\chi (\tilde{u},\sigma).$ The main
step in our argument is to validate the three conditions of Lemma
\ref{2.1} one by one.

It is obvious  that  $\chi (\tilde{u},0)=0$ for every
$\tilde{u}\in \mathfrak{B}$ by the uniqueness of the initial and boundary value
problem (\ref{2.2}). We see that the map $\chi$ is compact by Schauder estimates and
Sobolev embedding theorem (cf. \cite{L}). Consequently  we  claim that
$\chi$  is continuous. In deed, this fact follows from the
compactness of $\chi$ and  the uniqueness of the mapping $\chi
(\tilde{u},\sigma)$.

So it remains to  verify the third condition  for applying
Lemma \ref{2.1} to the problem (\ref{2.2}).  Suppose $\chi (u,\sigma)=u$. It
follows from (\ref{2.2}) that $u$ satisfies
\begin{equation}\label{2.3}
\left\{ \begin{aligned}
u_{t}-\sqrt{\epsilon^{2}+\sigma^{2}|\nabla u|^{2}}\cdot\left(\mathrm{div}\biggl(\frac{\nabla
u}{\sqrt{\epsilon^{2}+\sigma^{2}|\nabla u|^{2}}}\biggr)+\sigma\nu\right)&=0,
&(x,t)\in D\times (0, T), \\
u&=\sigma h(x),  &(x,t)\in \partial D\times [0, T),\\
u&=\sigma g(x),  &x\in D\times\{0\}.
\end{aligned} \right.
\end{equation}
By using regularity theory, $u\in C^{\infty}(D_{T})\cap C^{2.1}(\overline {D}_{T})$.  Then the condition (3)
in Lemma \ref{2.1} is equivalence to the boundness of $u$ and
$\nabla u$ in the $L^{\infty}$ norm which is independence
of $\sigma $ if $u\in C^{\infty}(D_{T})\cap C^{2.1}(\overline {D}_{T})$
and $u$ satisfies (\ref{2.3}).

In this section we derive  $W^{1,\infty}$ estimates for the
classical solutions of (\ref{2.3}) in which the bound is not only
independent of $\sigma$, but also independent of $\epsilon$ and $T$.

Set
\begin{equation}\label{2.4}
L_{\sigma}u=u_{t}-\sqrt{\epsilon^{2}+\sigma^{2}|\nabla
u|^{2}}\cdot\left(\mathrm{div}\biggl(\frac{\nabla
u}{\sqrt{\epsilon^{2}+\sigma^{2}|\nabla
u|^{2}}}\biggr)+\sigma\nu\right),
\end{equation}
and
$$ \partial_{p}D_{T}=(\partial D\times[0, T))\cup (D\times\{t=0\}).$$
The estimates follow from the next three lemmas.
The following comparison principle is by Theorem 14.1 in \cite{L}.
\begin{lemma}\label{2.2}
Suppose that $u_{1},u_{2}\in C^{2,1}(D\times(0,T ))\cap
C(\overline {D}\times[0, T)).$ If
$$L_{\sigma} u_{1}\geq L_{\sigma}u_{2},\quad
u_{1}|_{\partial_{p}D_{T}}\geq u_{2}|_{\partial_{p}D_{T}},$$ then
$$u_{1}|_{D_{T}}\geq
u_{2}|_{D_{T}}.$$
\end{lemma}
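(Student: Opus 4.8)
The plan is to reduce the quasilinear comparison to the weak maximum principle for a linear parabolic operator, which is essentially the content of Theorem 14.1 in \cite{L}. First I would put $L_\sigma$ into non-divergence form: expanding the divergence in (\ref{2.4}) gives
\[
L_\sigma u = u_t - a_{ij}(\nabla u)\,u_{ij} - b(\nabla u),\qquad
a_{ij}(p)=\delta_{ij}-\frac{\sigma^2 p_i p_j}{\epsilon^2+\sigma^2|p|^2},\quad
b(p)=\nu\sigma\sqrt{\epsilon^2+\sigma^2|p|^2}.
\]
For a fixed $\epsilon>0$ the matrix $(a_{ij}(p))$ is uniformly elliptic on every bounded set of gradients, its smallest eigenvalue being bounded below by $\epsilon^2/(\epsilon^2+\sigma^2|p|^2)>0$ and its largest equal to $1$, and both $a_{ij}$ and $b$ are smooth in $p$ since the denominator never vanishes.

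Set $w=u_1-u_2$. The key step is to linearize the hypothesis $L_\sigma u_1-L_\sigma u_2\ge 0$ along the segment joining $\nabla u_2$ to $\nabla u_1$. Writing
\[
a_{ij}(\nabla u_1)-a_{ij}(\nabla u_2)=\Big(\int_0^1\frac{\partial a_{ij}}{\partial p_k}(\nabla u_2+s\nabla w)\,ds\Big)w_k,
\]
treating $b(\nabla u_1)-b(\nabla u_2)$ the same way, and using that the second derivatives $\partial_{ij}u_2$ are bounded on compact subsets because $u_2\in C^{2,1}$, I obtain
\[
0\le L_\sigma u_1-L_\sigma u_2 = w_t-a_{ij}(\nabla u_1)\,w_{ij}-\beta_k(x,t)\,w_k=:\mathcal{L}w,
\]
where the first-order coefficients $\beta_k$ are locally bounded on $D\times(0,T)$ and $\mathcal{L}$ carries no zeroth-order term.

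It then remains to invoke the weak maximum principle for the linear, uniformly parabolic operator $\mathcal{L}$. Since $\mathcal{L}w\ge 0$ in $D_T$, the function $w$ is a supersolution of an operator with no zeroth-order term, so its minimum over $\overline{D}_T$ is attained on the parabolic boundary $\partial_p D_T$; as $w\ge 0$ there by hypothesis, I conclude $w\ge 0$ throughout $D_T$, that is $u_1\ge u_2$.

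The one delicate point is that $u_1,u_2$ are assumed smooth only in the open cylinder and merely continuous up to $\partial_p D_T$, so the coefficients $a_{ij}(\nabla u_1)$ and $\beta_k$ are controlled only locally and the inequality $\mathcal{L}w\ge 0$ is not strict. To make the maximum-principle step rigorous I would work on the shrunken cylinders $D\times(\tau,T-\tau)$, replace $w$ by $w+\delta t$ so that $\mathcal{L}(w+\delta t)\ge\delta>0$ becomes a strict supersolution (which cannot attain an interior minimum, since there $w_t\le 0$, $\nabla w=0$ and the Hessian is nonnegative would force $\mathcal{L}(w+\delta t)\le 0$), and then let $\delta,\tau\to 0$. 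This standard device is precisely how Theorem 14.1 of \cite{L} is established, so the lemma may alternatively be quoted directly from that reference.
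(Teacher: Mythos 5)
Your proof is correct, but it does genuinely more than the paper, which offers no argument at all: the lemma is introduced with the single sentence that it ``is by Theorem 14.1 in \cite{L}''. What you have written is, in effect, a self-contained reconstruction of that cited comparison theorem: the non-divergence form of $L_{\sigma}$ and the eigenvalue bounds for $a_{ij}(p)$ are right; the mean-value linearization converting $L_{\sigma}u_{1}-L_{\sigma}u_{2}\geq 0$ into $\mathcal{L}w\geq 0$ for a linear operator with no zeroth-order term (with $u_{2,ij}$ absorbed into the first-order coefficients $\beta_{k}$) is the standard reduction; and the $w+\delta t$ strict-supersolution device closes the argument. Two details in your last paragraph deserve attention, though neither is fatal. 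First, shrinking only in time does not actually bound the coefficients, since $u_{2,ij}$, hence $\beta_{k}$, may still blow up as $x\to\partial D$; but no bound is needed, because at an interior minimum of $W=w+\delta t$ one has $\nabla W=0$, so the term $\beta_{k}W_{k}$ vanishes pointwise no matter how large $\beta_{k}$ is. You can therefore run the pointwise argument directly on $\overline{D}\times[0,T']$ for any $T'<T$, whose parabolic boundary is exactly where the hypothesis $w\geq 0$ lives, and the shrinking becomes unnecessary. Second, if you do pass to $D\times(\tau,T-\tau)$, its bottom $\overline{D}\times\{\tau\}$ is not contained in $\partial_{p}D_{T}$, so letting $\tau\to 0$ requires the (easy, by continuity of $w$ up to $t=0$) statement $\min_{\overline{D}\times\{\tau\}}w\to\min_{\overline{D}\times\{0\}}w\geq 0$, which your sketch leaves implicit; also, at the minimum it is $W_{t}\leq 0$ rather than $w_{t}\leq 0$ that one obtains, a harmless notational slip. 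With either repair the proof is complete, and what it buys over the paper's bare citation is an elementary, verifiable argument for the precise operator $L_{\sigma}$ at hand.
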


 The estimates of the maximum norm for the solutions of (\ref{2.3}) is the
 following:

\begin{lemma}\label{2.3}
If $u\in C^{\infty}(D\times(0,
 T))\cap C(\overline {D}\times[0, T))$ is a solution of (\ref{2.3}). Then
\begin{equation}\label{1.03}
\|u\|_{L^{\infty}(D\times[0, T))}\leq C,
\end{equation}
where $C$ is depending only  on $\|h\|_{C(\partial D)},$
$\|g\|_{C(\overline {D})},$ and $D$.
\end{lemma}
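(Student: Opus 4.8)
The plan is to prove (\ref{1.03}) by the comparison principle of Lemma \ref{2.2}, using \emph{time-independent} barriers so that the resulting constant is automatically independent of $T$ (as well as of $\sigma$ and $\epsilon$). First I would rewrite the operator (\ref{2.4}) in non-divergence form,
\begin{equation*}
L_{\sigma}u=u_{t}-\left(\delta_{ij}-\sigma^{2}\frac{u_{i}u_{j}}{\epsilon^{2}+\sigma^{2}|\nabla u|^{2}}\right)u_{ij}-\sigma\nu\sqrt{\epsilon^{2}+\sigma^{2}|\nabla u|^{2}},
\end{equation*}
and record that the coefficient matrix $a_{ij}(\nabla u)=\delta_{ij}-\sigma^{2}u_{i}u_{j}/(\epsilon^{2}+\sigma^{2}|\nabla u|^{2})$ is positive definite with $a_{ij}\leq\delta_{ij}$ and $\mathrm{tr}(a_{ij})>n$, uniformly in $\epsilon,\sigma$. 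A useful structural remark is that $L_{\sigma}$ depends on $u$ only through its derivatives, so any barrier may be shifted by an additive constant without changing $L_{\sigma}$; this lets me match the data (\ref{2.3}) cheaply once the differential inequality is arranged.

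Next I would look for a stationary supersolution $\overline{u}(x)=w(x)+K$, with $K$ chosen so that $\overline{u}\geq\sigma h$ on $\partial D$ and $\overline{u}\geq\sigma g$ on $\overline{D}$ (possible since $|\sigma h|\leq\|h\|_{C(\partial D)}$ and $|\sigma g|\leq\|g\|_{C(\overline{D})}$), and $w$ a fixed profile adapted to the convex geometry of $D$. Here
\begin{equation*}
L_{\sigma}\overline{u}=-\left(\delta_{ij}-\sigma^{2}\frac{w_{i}w_{j}}{\epsilon^{2}+\sigma^{2}|\nabla w|^{2}}\right)w_{ij}-\sigma\nu\sqrt{\epsilon^{2}+\sigma^{2}|\nabla w|^{2}},
\end{equation*}
and in divergence form the condition $L_{\sigma}\overline{u}\geq0$ is equivalent to $\mathrm{div}\bigl(\nabla w/\sqrt{\epsilon^{2}+\sigma^{2}|\nabla w|^{2}}\bigr)+\sigma\nu\leq0$. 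Thus, up to the regularization, it suffices that the level surfaces of $w$ have mean curvature dominating $|\nu|$, i.e. $\mathrm{div}\bigl(\nabla w/|\nabla w|\bigr)+\nu\leq0$. This is exactly where strict convexity of $D$ and the normalization $|\nu|<\tfrac{nH_{0}}{n+1}$ enter: the inner parallel surfaces of $\partial D$ have sum of principal curvatures at least $nH_{0}$, and $nH_{0}>(n+1)|\nu|>|\nu|$, so a profile modelled on $\mathrm{dist}(\cdot,\partial D)$ — unit gradient, level sets carrying this curvature — makes $L_{\sigma}\overline{u}\geq0$ throughout $D$. A symmetric profile, decreasing into $D$, gives a stationary subsolution $\underline{u}$ with $L_{\sigma}\underline{u}\leq0$ and $\underline{u}\leq u$ on $\partial_{p}D_{T}$.

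With $\overline{u}$ and $\underline{u}$ available, two applications of Lemma \ref{2.2} on $D_{T}$ yield $\underline{u}\leq u\leq\overline{u}$, hence (\ref{1.03}) with $C=\max(\|\overline{u}\|_{\infty},\|\underline{u}\|_{\infty})$, a bound that by construction is independent of $\sigma$, $\epsilon$ and $T$ and depends only on $\|h\|_{C(\partial D)}$, $\|g\|_{C(\overline{D})}$, $D$ (through $H_{0}$) and $\nu$.

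The main obstacle is the barrier construction itself. The distance function is smooth and concave only in a collar of $\partial D$, so one must produce a genuinely $C^{2}$ profile on all of $\overline{D}$ whose level-set curvature still dominates $|\nu|$, while keeping its gradient bounded. The latter is essential: the forcing term is comparable to $|\nu|\,|\nabla w|$, and since uniform strict concavity forces $|\nabla w|$ to grow with the diameter of $D$, naive quadratic barriers fail on large domains — it is precisely the interplay of the curvature lower bound $H_{0}$ with the smallness of $\nu$ that rescues the estimate. The remaining delicate point is to verify the differential inequality uniformly as $\epsilon\to0$, where the operator degenerates in the gradient direction; here the unit length of $\nabla\,\mathrm{dist}(\cdot,\partial D)$ keeps $\sqrt{\epsilon^{2}+\sigma^{2}|\nabla w|^{2}}$ comparable to $\sigma$ and makes the curvature term dominate for all small $\epsilon$.
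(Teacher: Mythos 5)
Your overall strategy coincides with the paper's: build \emph{time-independent} barriers, shift them by a constant (legitimate, since the operator depends only on derivatives), and invoke the comparison principle of Lemma 2.2 on $D_{T}$, so that the resulting constant is independent of $T$, $\sigma$ and $\epsilon$. Your reduction of the supersolution inequality for $\overline u=w+K$ to $\mathrm{div}\bigl(\nabla w/\sqrt{\epsilon^{2}+\sigma^{2}|\nabla w|^{2}}\bigr)+\sigma\nu\le 0$ is also correct. The genuine gap is the one you yourself flag as ``the main obstacle'': the global barrier is never actually constructed, and the profile you propose cannot work as stated. The distance function $d(x)=\mathrm{dist}(x,\partial D)$ is $C^{2}$ with $\Delta d\le -nH_{0}$ only in a collar $\{d<\rho\}$; past the cut locus it fails to be $C^{2}$, so it cannot be inserted into Lemma 2.2 (which compares classical $C^{2,1}$ functions), and a collar barrier is useless for the sup bound, since comparison on the collar presupposes control of $u$ on the collar's inner boundary, which is exactly what is unknown. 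Nor is this a removable technicality: strict convexity together with the bound $H\ge H_{0}$ on $\partial D$ does \emph{not} bound $\mathrm{diam}\,D$ (consider a long, thin, strictly convex ``capsule''-like domain), so there is no elementary smooth profile on all of $\overline D$ with bounded gradient and level-set curvature dominating $|\nu|$ --- your own remark about quadratic barriers failing on large domains points at precisely this. The proposal therefore stops exactly where the real work of the lemma begins.

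The paper fills this hole not by an explicit construction but by existence theory: by Theorem 16.10 in \cite{GT} --- this is where the Serrin-type hypothesis $|\nu|<\frac{nH_{0}}{n+1}$ enters --- the Dirichlet problem for the stationary equation with boundary value $1$ admits an exact solution $v^{\epsilon}\in C^{2+\alpha}(\overline D)$; the rescaling $w=\frac{\sigma}{\epsilon}v^{\epsilon}$ turns this into the classical prescribed-mean-curvature equation $\mathrm{div}\bigl(\nabla w/\sqrt{1+|\nabla w|^{2}}\bigr)+\sigma^{2}\nu=0$, whose maximum estimate (Theorem 6.1 in \cite{CW}) gives $\max_{\overline D}|v^{\epsilon}|\le 1+C\epsilon\sigma\nu\le C$. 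Since the operator sees only derivatives, $v^{\epsilon}+\kappa$ is still an exact stationary solution, and $\kappa$ can be chosen from this sup bound so that $v^{\epsilon}+\kappa$ dominates the data $\sigma h$ and $\sigma g$; Lemma 2.2 then yields the upper bound, and the lower bound is symmetric. To complete your argument you would have to replace the hand-made distance profile by such a solvability result (or an equivalently nontrivial global construction); with that substitution, the rest of your outline goes through.
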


\begin{proof}
Step 1. By $|\nu|<\frac{nH_{0}}{n+1}$ and  Theorem 16.10 in \cite{GT},
there exists $\alpha>0$, $v^{\epsilon}\in C^{2+\alpha}(\overline {D})$,
such that
\begin{equation*}\label{1.03}
\left\{ \begin{aligned}
-\sqrt{\epsilon^{2}+\sigma^{2}|\nabla v^{\epsilon}|^{2}}\cdot\left(\mathrm{div}\biggl(\frac{\nabla
v^{\epsilon}}{\sqrt{\epsilon^{2}+\sigma^{2}|\nabla v^{\epsilon}|^{2}}}\biggr)+\sigma \nu\right)&=0,
&x\in D, \\
v^{\epsilon}&=1, &x\in \partial D.
\end{aligned} \right.
\end{equation*}
Set $\displaystyle w=\frac{\sigma}{\epsilon}v^{\epsilon}$. Then $w$
is a classical solution of the following Dirichlet problem:
\begin{equation*}\label{1.03}
\left\{ \begin{aligned}
\mathrm{div}\biggl(\frac{\nabla w}{\sqrt{1+|\nabla w|^{2}}}\biggr)+\sigma^{2}\nu &=0, &x\in D, \\
w&=\frac{\sigma}{\epsilon}, &x\in \partial D.
\end{aligned} \right.
\end{equation*}
It follows from Theorem 6.1 in \cite{CW} that there exists a constant $C$
depending only on $n$ and $diam D$ such that
\begin{equation*}\label{1.03}
\max_{\overline {D}}|w|\leq \frac{\sigma}{\epsilon}+C\sigma^{2}\nu.
\end{equation*}
So
\begin{equation}\label{2.6}
\max_{\overline {D}}|v^{\epsilon}|\leq 1+\epsilon\sigma\nu C\leq C.
\end{equation}

Step 2. Suppose that $\kappa$ is a positive constant which will be determined
later. Set $v_{1}^{\epsilon}= v^{\epsilon}+\kappa$. Then
$v_{1}^{\epsilon}$ satisfies
\begin{equation*}\label{1.03}
\left\{ \begin{aligned} -\sqrt{\epsilon^{2}+\sigma^{2}|\nabla v_{1}^{\epsilon}|^{2}}\cdot
\left(\mathrm{div}\biggl(\frac{\nabla v_{1}^{\epsilon}}{\sqrt{\epsilon^{2}+\sigma^{2}|\nabla v_{1}^{\epsilon}|^{2}}}\biggr)+\sigma\nu\right)
&=0,
&x\in D, \\
v_{1}^{\epsilon}&=1+\kappa, &x\in\partial D.
\end{aligned} \right.
\end{equation*}
By (\ref{2.6}) we can choose $\kappa$ depending only on $\|h\|_{C(\partial
D)},$ $\|g\|_{C(\overline {D})},$ and $D$ such that
\begin{equation*}\label{1.03}
v_{1}^{\epsilon}(x)\geq g(x),\quad v_{1}^{\epsilon}(x)\geq
h(x),\quad x\in \overline {D}.
\end{equation*}
By applying Lemma \ref{2.2} we arrive at
\begin{equation*}\label{1.03}
u(x,t)\leq v_{1}^{\epsilon}(x)\leq C+\kappa\leq C,\quad (x,t)\in
\overline {D}\times [0, T).
\end{equation*}
For the same reason we obtain
\begin{equation*}\label{1.03}
u(x,t)\geq -C,\quad (x,t)\in \overline {D}\times [0, T).
\end{equation*}
This yields the desired results.
\end{proof}

The following is the gradient estimates for the solutions of (\ref{2.3}).

\begin{lemma}\label{2.4}
If $u\in C^{\infty}(D\times(0,
T))\cap C(\overline {D}\times[0, T))$ and is a solution of (\ref{2.3}). Then
\begin{equation}\label{2.7}
\|\nabla u\|_{L^{\infty}(D\times[0, T))}\leq C.
\end{equation}
where $C$ is depending only  on $\|h\|_{C^{2}(\partial D)},$
$\|g\|_{C^{1}(\overline {D})}$ and $D$.
\end{lemma}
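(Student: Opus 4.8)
The plan is to establish the gradient bound in two stages: first a boundary gradient estimate on $\partial D \times [0,T)$, then an interior-to-boundary reduction via the maximum principle applied to the spatial derivatives of $u$. The key structural feature I would exploit is that the equation $L_\sigma u = 0$ is translation-invariant in $t$ and, being quasilinear and of mean-curvature type (with the regularizing $\epsilon$ making it uniformly but not uniformly-in-$\epsilon$ parabolic), its derivatives satisfy a linear parabolic equation to which the comparison principle of Lemma~\ref{2.2} applies. The whole point, as the text stresses, is to produce a constant $C$ depending only on $\|h\|_{C^2(\partial D)}$, $\|g\|_{C^1(\overline D)}$ and $D$, hence independent of $\sigma$, $\epsilon$ and $T$.

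First I would construct barriers to bound $|\nabla u|$ on the parabolic boundary $\partial_p D_T$. On $D\times\{0\}$ the gradient is controlled by $\|g\|_{C^1(\overline D)}$ directly. On $\partial D\times[0,T)$ the tangential derivative is controlled by $\|h\|_{C^2(\partial D)}$, so only the normal derivative must be estimated, and here the strict convexity of $D$ together with the smallness condition $|\nu|<\frac{nH_0}{n+1}$ enters: I would use the signed distance function to $\partial D$ to build upper and lower barriers of the form $w^\pm = \sigma h \pm \psi(d(x))$ with $\psi$ a suitable concave function, chosen so that $L_\sigma w^+ \le 0 \le L_\sigma w^-$ near the boundary. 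The convexity guarantees $\Delta d \le 0$ with a quantitative bound via $H_0$, which is exactly what makes the mean-curvature operator push the barrier in the right direction; the constraint on $\nu$ ensures the zeroth-order term $\sigma\nu\sqrt{\epsilon^2+\sigma^2|\nabla u|^2}$ cannot overwhelm the curvature term. Comparison (Lemma~\ref{2.2}) then pins $u$ between $w^-$ and $w^+$, yielding a boundary gradient bound with the advertised dependence.

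Next I would propagate the bound into the interior. Differentiating $L_\sigma u = 0$ in a spatial direction $x_k$ shows that $u_k$ solves a linear, nondivergence parabolic equation with bounded measurable coefficients (the coefficients being the linearization $a_{ij} = \delta_{ij} - \sigma^2 u_i u_j/(\epsilon^2+\sigma^2|\nabla u|^2)$, which are uniformly elliptic with ellipticity ratio controlled independently of the solution). The standard device is to consider $|\nabla u|^2$, or more robustly the quantity $w = |\nabla u|^2 + \lambda u^2$ with $\lambda$ chosen to absorb the lower-order contributions from the $\sigma\nu$ term; a Bernstein-type computation shows $w$ is a subsolution of a parabolic operator of the same elliptic type, so by the maximum principle its interior maximum is attained on $\partial_p D_T$, where it is already controlled. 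Since $\|u\|_{L^\infty}$ is bounded by Lemma~\ref{2.3}, this closes the estimate.

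The main obstacle I anticipate is making the Bernstein argument genuinely uniform in $\epsilon$. The operator degenerates as $\epsilon\to 0$ in the sense that the coefficient $a_{ij}$ retains a fixed ellipticity ratio (between $1$ and some $\epsilon$-independent bound) but the gradient of the coefficients and the forcing term $\sigma\nu\sqrt{\epsilon^2+\sigma^2|\nabla u|^2}$ must be handled so that no factor of $1/\epsilon$ sneaks into the final constant; the mean-curvature structure is what saves this, because the "bad" second-derivative terms in the Bernstein identity organize into the trace of $a_{ij}$ against the Hessian and combine with the good negative terms. Keeping track of this cancellation, rather than the barrier construction, is where the care is needed. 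I would therefore set up the Bernstein computation in the $a_{ij}$-divergence form from the start and use the boundary estimate from the first stage as the only boundary input.
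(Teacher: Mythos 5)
Your first stage coincides with the paper's Step 1: barriers built from the distance function $d(x)$ in a collar of $\partial D$, exploiting strict convexity through $\triangle d\leq -nH_{0}$ (the paper's (2.14)) and the smallness condition on $\nu$, then the comparison principle of Lemma 2.2 to control the normal derivative on $\partial D\times[0,T)$, with tangential derivatives controlled by $h$ and the initial slice by $\|g\|_{C^{1}(\overline D)}$. One slip: your barrier inequalities are written backwards --- with this operator the upper barrier must satisfy $L_{\sigma}w^{+}\geq 0$ and the lower one $L_{\sigma}w^{-}\leq 0$ (this is exactly (2.8) in the paper), not $L_{\sigma}w^{+}\leq 0\leq L_{\sigma}w^{-}$.

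The interior stage is where your plan has a genuine gap, and the obstacle you flag in your last paragraph is of your own making. The paper's argument is essentially one line: differentiate (2.3) in $x_{i}$ and \emph{freeze the coefficients}, i.e.\ regard $a^{kl}$ and $b^{l}$ as given smooth functions of $(x,t)$; then each $\varpi=u_{i}$ solves the linear equation $\varpi_{t}-a^{kl}\varpi_{kl}-b^{l}\varpi_{l}=0$ with \emph{no zeroth-order term}, because the $\nu$-term differentiates to the drift $\frac{\nu\sigma^{3}u_{l}}{\sqrt{\epsilon^{2}+\sigma^{2}|\nabla u|^{2}}}\,\varpi_{l}$, not a term in $\varpi$ itself. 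The maximum principle then gives $\sup_{D_{T}}|u_{i}|=\sup_{\partial_{p}D_{T}}|u_{i}|$, and since this conclusion involves no constant at all, uniformity in $\epsilon$, $\sigma$ and $T$ is automatic. You instead commit to a quasilinear Bernstein computation on $w=|\nabla u|^{2}+\lambda u^{2}$ in which the coefficients are differentiated as functions of $\nabla u$, and you explicitly leave unresolved (``where the care is needed'') whether the resulting quadratic Hessian terms can be absorbed uniformly in $\epsilon$; that unresolved absorption is the gap, and it is unnecessary, since in the frozen-coefficient form $|\nabla u|^{2}$ is automatically a subsolution: $w_{t}-a^{kl}w_{kl}-b^{l}w_{l}=-2a^{kl}u_{ik}u_{il}\leq 0$, with nothing to cancel. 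Two supporting claims are also false. First, $a_{ij}$ is \emph{not} uniformly elliptic independently of the solution and of $\epsilon$: its smallest eigenvalue is $\epsilon^{2}/(\epsilon^{2}+\sigma^{2}|\nabla u|^{2})$, so the ellipticity ratio degenerates as $\epsilon\to 0$; this is harmless only because the maximum-principle argument needs no quantitative ellipticity, merely $a^{kl}\geq 0$ and the absence of a zeroth-order term. Second, the correction $\lambda u^{2}$ has nothing to absorb (again, no zeroth-order term arises) and actually hurts: $(u^{2})_{t}-a^{kl}(u^{2})_{kl}-b^{l}(u^{2})_{l}$ contains, through the term $-2u\,b^{l}u_{l}$, contributions linear in $D^{2}u$ of uncontrolled sign which the degenerate good term $-2a^{kl}u_{ik}u_{il}$ cannot dominate. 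So the correct repair of your second stage is not a more careful Bernstein identity but the paper's observation that the differentiated equation already obeys the maximum principle verbatim.
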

\begin{proof}
Step 1.  We derive the gradient estimates of $u$ at the boundary and
the methods comes from \cite{SZ}. Set $w=u-h$. Then by (\ref{2.3}) $w$ satisfies
the following equations on $D\times (0, T)$:
\begin{equation*}
\mathfrak{L}w \triangleq
w_{t}-\left(\delta_{ij}-\sigma^{2}\frac{(w_{i}+h_{i})(w_{j}+h_{j})}{\epsilon^{2}+\sigma^{2}|\nabla w+\nabla
h|^{2}}\right)(w_{ij}+h_{ij})-\sigma\nu\sqrt{\epsilon^{2}+\sigma^{2}|\nabla w+\nabla h|^{2}}=0.
\end{equation*}
In the neighborhood $\Theta$ of $\partial D\times [0, T)$ we will
construct the functions $\psi^{\pm}$ which are independent  of $t$
and satisfy
\begin{equation}\label{2.8}
 \pm \mathfrak{L}\psi^{\pm}\geq 0,\quad (x,t)\in \Theta\cap(D\times (0, T)),
\end{equation}
\begin{equation}\label{2.9}
\psi^{\pm}=w=0,\quad (x,t)\in \Theta\cap(\partial D\times [0,T)),
\end{equation}
\begin{equation}\label{2.10}
 \psi^{-}\leq w\leq \psi^{+},\quad (x,t)\in(\partial \Theta\cap (D\times [0, T)))\cup(\Theta\cap(D\times \{0\})).
\end{equation}
Consequently by Lemma \ref{2.2} we have
\begin{equation}\label{2.11}
\psi^{-}\leq w\leq \psi^{+}, \quad (x,t)\in \bar{\Theta}\cap (D\times [0, T)).
\end{equation}
For $(x,t)\in \partial D\times [0, T),$ if $\vec{a}$ is the normal
vector of $\partial D$ such that
$$ x+s\vec{a}\in\bar{\Theta}\cap (D\times [0, T), \qquad\mathrm{when}\quad 0<s\leq 1.$$
Then by (\ref{2.9}) and (\ref{2.11}) we obtain
\begin{equation*}\label{1.03}
\frac{\psi^{-}(x+s\vec{a})-\psi^{-}(x)}{s}\leq
\frac{w(x+s\vec{a},t)-w(x,t)}{s}\leq
\frac{\psi^{+}(x+s\vec{a})-\psi^{+}(x)}{s}.
\end{equation*}
Letting $s\rightarrow 0$, we have
\begin{equation*}\label{1.03}
\frac{\partial \psi^{-}}{\partial \vec{a}}(x)\leq\frac{\partial
w}{\partial \vec{a}}(x,t)\leq \frac{\partial \psi^{+}}{\partial
\vec{a}}(x).
\end{equation*}
A direct calculation yields on $\Theta\cap(\partial D\times [0,T))$
\begin{equation}\label{2.12}\begin{aligned}
|\nabla u|&\leq \|\nabla w\|_{C(\Theta\cap(\partial D\times [0,T))}+\|\nabla h\|_{C(\Theta\cap(\partial D\times [0,T)))}\\
&\leq \|\nabla \psi^{+}\|_{C(\Theta\cap(\partial D\times [0,T)))}
+\|\nabla \psi^{-}\|_{C(\Theta\cap(\partial D\times [0,T)))}+\|\nabla h\|_{C(\Theta\cap(\partial D\times [0,T)))}\leq C .
\end{aligned}
\end{equation}

In the following we constitute $\psi^{+}$ and $\psi^{-}$ which
satisfy (\ref{2.8})--(\ref{2.10}) in detail. Firstly set
$$\psi^{+}(x)=\lambda d(x), \ x\in \overline {D}, \ \ \ \
N=\{x\in D | d(x)<\rho\}, $$
where $d(x)$ is the distance from $x$ to $\partial D$, $\rho$ and  $\lambda$ are positive
constant which will be determined later. Selecting  the positive
constant $\rho$ to be small enough such that $d(x)$ satisfies

$(a)$ $d(x)\in C^{2}(N)$.

$(b)$ In $N$, $|\nabla d|=1$, and
$$ \sum^{n+1}_{i=1}d_{i}d_{ij} =0, \ \ j=1,2,\cdots,n+1. $$

$(c)$ If $x\in N$, then there exists $x_{0}\in
\partial D$ such that $d(x)=|x-x_{0}|$. By Lemma 14.17
in \cite{GT} we can present the formula
\begin{equation}\label{2.13}
-\triangle d(x)=\sum^{n}_{i=1}\frac{k_{i}}{1-k_{i}d(x)},
\end{equation}
where $k_{1}, k_{2},\cdots,k_{n}$ are the principle curvature of
$\partial D$ at $x_{0}$.

Because $\partial D$ is strictly convex, the mean curvature of
$\partial D$ have the positive lower bound and we denote it by $H_{0}$.
Choosing $\rho<\frac1{H_0}$ and by (\ref{2.13}) we have
\begin{equation}\label{2.14}
\triangle d(x)\leq -nH_0 ,\quad x\in N.
\end{equation}
Now we verify $\psi^{+}$ satisfying (\ref{2.8})--(\ref{2.10}) one by one.

(1) By the definition of $d(x)$, $\psi^{+}$ satisfies (\ref{2.9}).

(2) If $x\in N$,  then we can choose
$x_{0}\in
\partial D$ such that  $d(x)=|x-x_{0}|$.  And by $w(x_{0},0)=0$
we obtain $$w(x,0)=g(x)-h(x)-[g(x_{0})-h(x_{0})]\leq
\beta|x-x_{0}|=\beta d(x),$$ where $\beta$ is  depending only on
$\|h\|_{C^{1}(\partial D)}$ and $\|g\|_{C^{1}(\overline {D})}.$  On the other
hand, if $x\in \partial N\cap D$, then $d(x)=\rho $. So we can select the positive
constant $\lambda$, such that $\psi^{+}$ satisfies (\ref{2.10}).

(3) $\psi^{+}$ satisfies (\ref{2.8}). In fact,
\begin{equation*}\label{1.03}
\mathfrak{L}\psi^{+}=-\lambda\triangle d-\triangle h
+\sigma^{2}\left(\frac{\lambda^{2}d_{i}d_{j}+\lambda d_{i}h_{j}+\lambda
h_{i}d_{j}+h_{i}h_{j}}{\epsilon^{2}+\sigma^{2}|\lambda\nabla d+\nabla h|^{2}}\right)(\lambda d_{ij}+h_{ij})
\end{equation*}
\begin{equation*}\label{1.03}
-\sigma\nu\sqrt{\epsilon^{2}+\sigma^{2}|\nabla
h|^{2}+2\lambda\sigma^{2}\nabla d\cdot\nabla
h+\sigma^{2}\lambda^{2}} .
\end{equation*}
Then by (\ref{2.14}) and $\Sigma^{n+1}_{i=1}d_{ij}d_{i}=0$ we have
\begin{equation*}\label{1.03}
\mathfrak{L}\psi^{+}\geq n\lambda H_{0}-
\|h\|_{C^{2}(\overline {D})}+\sigma^{2}\left(\frac{\lambda^{2}d_{i}d_{j}h_{ij}+2\lambda
h_{i}d_{j}h_{ij}+h_{i}h_{j}h_{ij}+\lambda d_{ij}h_{i}h_{j}
}{\epsilon^{2}+\sigma^{2}\lambda^{2}+2\sigma^{2}\lambda\nabla
d\cdot\nabla h+\sigma^{2}|\nabla h|^{2}}\right)
\end{equation*}
\begin{equation*}\label{1.03}
-\sigma\nu\sqrt{\epsilon^{2}+\sigma^{2}|\nabla
h|^{2}+2\sigma^{2}\lambda\nabla d\cdot\nabla
h+\sigma^{2}\lambda^{2}}.
\end{equation*}
By Lemma 14.17 in \cite{GT},  $|d_{ij}|$ have an upper bound depending
only on $\partial D$. Let positive constant $\lambda$ to be large
enough then  we obtain
\begin{equation}\label{2.15}
\mathfrak{L}\psi^{+}\geq n\lambda H_{0}- \lambda\sigma^{2} |\nu | -C\geq
n\lambda H_{0}- \lambda |\nu | -C,
\end{equation}
where $C$ is depending only on $\partial D,\|h\|_{C^{2}(\partial D)}$.
From (\ref{2.15}) and $|\nu|<nH_0$ let $\lambda$ to be large enough which is depending only on
$\partial D$  and $\|h\|_{C^{2}(\partial D)}$ then we have
\begin{equation*}\label{1.03}
\mathfrak{L}\psi^{+}\geq 0 .
\end{equation*}

For the same reason we can construct $\psi^{-}$ which satisfies (2.8)--(2.10). So we have obtained
the desired results of step 1 by (\ref{2.12}).

Step 2. For $i\in \{1,2,\cdots,n+1\}$, let $\varpi=u_{i}$.
Differentiating (\ref{2.3}) with respect to $x_{i}$ we get
\begin{equation*}\label{1.03}
\varpi_{t}-a^{kl}\varpi_{kl}-b^{l}\varpi_{l}=0,\quad (x,t)\in
D\times(0,T),
\end{equation*}
where
$$a^{kl}=\delta_{kl}-\frac{\sigma^{2}u_{k}u_{l}}{\epsilon^{2}+\sigma^{2}|\nabla u|^{2}} ,$$
$$b^{l}=\frac{2\sigma^{4}u^{\epsilon}_{k}u^{\epsilon}_{m}u^{\epsilon}_{km}u_{l}}{(\epsilon^{2}+\sigma^{2}|\nabla u|^{2})^{2}}
-\frac{2\sigma^{2}u_{k}u_{kl}}{\epsilon^{2}+\sigma^{2}|\nabla
u|^{2}}-\frac{\nu\sigma^{3}u_{l}}{\sqrt{\epsilon^{2}+\sigma^{2}|\nabla u|^{2}}} .$$
By the maximum principle for linear parabolic equation (cf.
\cite{L}) and (\ref{2.12}) we obtain (\ref{2.8}).
\end{proof}

From Lemma \ref{2.1}--\ref{2.4} and the Schauder estimates we  conclude that
\begin{theorem}\label{2.5} For any $\epsilon>0$, there exists
$u^{\epsilon}$ which satisfies
$$u^{\epsilon}\in C^{\infty}(D\times (0,+\infty)),\quad u^{\epsilon}\in C(\overline {D}\times
[0,+\infty)),\quad \nabla u^{\epsilon}\in C(\overline {D}\times[0,+\infty)),$$
and $u^{\epsilon}$ is a classical solution of (\ref{2.1}). And there holds
$$\|u^{\epsilon}\|_{L^{\infty}(D\times[0,+\infty))}\leq C,\quad
\|\nabla u^{\epsilon}\|_{L^{\infty}(D\times[0,+\infty))}\leq C,$$
where $C$ depends only on $\|h\|_{C^{2}(\partial D)},$
$\|g\|_{C^{1}(\overline {D})},$ $H_{0}$ and $D$.
\end{theorem}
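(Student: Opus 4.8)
The plan is to fix $\epsilon\in(0,1)$ and a finite time $T>0$, solve the approximate problem (\ref{2.1}) on $D_{T}=D\times[0,T)$ by the Leray--Schauder fixed point theorem (Lemma \ref{2.1}) applied to the map $\chi$ defined through the linear problem (\ref{2.2}), and then let $T\to+\infty$. Conditions (1) and (2) of Lemma \ref{2.1} have already been checked in the text: $\chi$ is continuous and compact, and $\chi(\tilde u,0)=0$. So the only remaining hypothesis is condition (3), namely a uniform a priori bound in $\mathfrak{B}$ for every fixed point $u=\chi(u,\sigma)$; this is exactly what Lemmas \ref{2.3} and \ref{2.4} supply.

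First I would record that any such fixed point solves (\ref{2.3}), so by the maximum-norm estimate (Lemma \ref{2.3}) and the gradient estimate (Lemma \ref{2.4}) one has $\|u\|_{L^{\infty}(D_{T})}+\|\nabla u\|_{L^{\infty}(D_{T})}\le C$, with $C$ depending only on $\|h\|_{C^{2}(\partial D)}$, $\|g\|_{C^{1}(\overline D)}$, $H_{0}$ and $D$, and, crucially, independent of $\sigma$, $\epsilon$ and $T$. Since the norm on $\mathfrak{B}$ is the $C^{1}$ norm, this is precisely the bound required by condition (3). Lemma \ref{2.1} then produces a fixed point $u^{\epsilon}=\chi(u^{\epsilon},1)$, which is a solution of (\ref{2.1}) on $D_{T}$.

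Next I would upgrade the regularity. For $\epsilon$ fixed, the equation (\ref{2.1}) is uniformly parabolic along $u^{\epsilon}$: one has $\sqrt{\epsilon^{2}+|\nabla u^{\epsilon}|^{2}}\ge\epsilon$, and the coefficient matrix $\delta_{ij}-\frac{u^{\epsilon}_{i}u^{\epsilon}_{j}}{\epsilon^{2}+|\nabla u^{\epsilon}|^{2}}$ has eigenvalues in $\bigl[\frac{\epsilon^{2}}{\epsilon^{2}+|\nabla u^{\epsilon}|^{2}},\,1\bigr]$, bounded away from $0$ and $\infty$ because $\nabla u^{\epsilon}$ is already controlled by the previous step. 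Interior parabolic H\"older and Schauder estimates, applied iteratively so as to bootstrap on the progressively smoother coefficients, then give $u^{\epsilon}\in C^{\infty}(D\times(0,T))$; and the compatibility $h=g$ on $\partial D$ together with $h\in C^{2}(\partial D)$, $g\in C^{2}(\overline D)$ and boundary Schauder theory yields continuity up to the parabolic boundary, that is $u^{\epsilon}\in C(\overline D\times[0,T))$ and $\nabla u^{\epsilon}\in C(\overline D\times[0,T))$.

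Finally, to reach $[0,+\infty)$ I would use that the comparison principle (Lemma \ref{2.2}) forces uniqueness for (\ref{2.1}), so the solutions constructed for different $T$ agree on their common domain and glue to a single $u^{\epsilon}$ on $D\times[0,+\infty)$. Because the $C^{1}$ bounds of Lemmas \ref{2.3}--\ref{2.4} do not involve $T$, the global $u^{\epsilon}$ inherits $\|u^{\epsilon}\|_{L^{\infty}(D\times[0,+\infty))}+\|\nabla u^{\epsilon}\|_{L^{\infty}(D\times[0,+\infty))}\le C$ with the stated dependence. The genuinely substantial analytic input is already contained in Lemmas \ref{2.3} and \ref{2.4}; I expect the main delicate point in the assembly to be the $T$-uniformity that legitimizes the passage to infinite time, in tandem with the corner continuity of $\nabla u^{\epsilon}$ at $\partial D\times\{0\}$, which relies essentially on the compatibility condition $h=g$ on $\partial D$.
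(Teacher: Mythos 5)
Your proposal is correct and follows essentially the same route as the paper: Leray--Schauder applied to the map $\chi$ from (\ref{2.2}), with conditions (1)--(2) as checked in the text and condition (3) supplied by the $\sigma$-, $\epsilon$- and $T$-independent estimates of Lemmas \ref{2.3} and \ref{2.4}, followed by Schauder/bootstrap regularity. The paper compresses this into one line (``From Lemma \ref{2.1}--\ref{2.4} and the Schauder estimates we conclude''), so your explicit treatment of the uniform parabolicity for fixed $\epsilon$ and of the gluing in $T$ via the comparison principle merely fills in details the paper leaves implicit.
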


\begin{corollary}\label{2.6}
Suppose $u^{\epsilon}$ is a classical solution of (\ref{2.1}). Then there holds
\begin{equation}\label{2.16}
\int^{+\infty}_{0}\int_{D}|u_{t}^{\epsilon}|^{2}dxdt\leq C ,
\end{equation}
where $C$ depends only on $\|h\|_{C^{2}(\partial D)},$
$\|g\|_{C^{1}(\overline {D})},$ $H_{0}$ and $D$.
\end{corollary}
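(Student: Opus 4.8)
The plan is to exploit the gradient-flow (Lyapunov) structure hidden in the approximate equation (\ref{2.1}). I introduce the energy
\[
E^{\epsilon}(t)=\int_{D}\left(\sqrt{\epsilon^{2}+|\nabla u^{\epsilon}|^{2}}-\nu u^{\epsilon}\right)dx,
\]
whose first variation reproduces exactly the elliptic operator on the right-hand side of (\ref{2.1}). Concretely, I multiply the equation $u_{t}^{\epsilon}=\sqrt{\epsilon^{2}+|\nabla u^{\epsilon}|^{2}}\,(\mathrm{div}(\nabla u^{\epsilon}/\sqrt{\epsilon^{2}+|\nabla u^{\epsilon}|^{2}})+\nu)$ by the factor $u_{t}^{\epsilon}/\sqrt{\epsilon^{2}+|\nabla u^{\epsilon}|^{2}}$ and integrate over $D$ to get
\[
\int_{D}\frac{|u_{t}^{\epsilon}|^{2}}{\sqrt{\epsilon^{2}+|\nabla u^{\epsilon}|^{2}}}\,dx=\int_{D}u_{t}^{\epsilon}\left(\mathrm{div}\frac{\nabla u^{\epsilon}}{\sqrt{\epsilon^{2}+|\nabla u^{\epsilon}|^{2}}}+\nu\right)dx.
\]

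Next I integrate by parts. Since the Dirichlet datum $h(x)$ is independent of $t$, differentiating $u^{\epsilon}=h$ on $\partial D$ in time gives $u_{t}^{\epsilon}=0$ on $\partial D\times(0,T)$, so the boundary term drops and the divergence term becomes $-\int_{D}(\nabla u^{\epsilon}\cdot\nabla u_{t}^{\epsilon})/\sqrt{\epsilon^{2}+|\nabla u^{\epsilon}|^{2}}\,dx$. Recognizing that this integrand equals $\partial_{t}\sqrt{\epsilon^{2}+|\nabla u^{\epsilon}|^{2}}$ and that $\nu u_{t}^{\epsilon}=\partial_{t}(\nu u^{\epsilon})$, I obtain $\int_{D}|u_{t}^{\epsilon}|^{2}/\sqrt{\epsilon^{2}+|\nabla u^{\epsilon}|^{2}}\,dx=-\frac{d}{dt}E^{\epsilon}(t)$; integrating in time then yields
\[
\int_{0}^{T}\!\!\int_{D}\frac{|u_{t}^{\epsilon}|^{2}}{\sqrt{\epsilon^{2}+|\nabla u^{\epsilon}|^{2}}}\,dx\,dt=E^{\epsilon}(0)-E^{\epsilon}(T).
\]

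Finally, the estimate (\ref{2.16}) follows from bounds already in hand. By Lemma \ref{2.4}, $|\nabla u^{\epsilon}|\leq C$, and since $\epsilon<1$ this gives $\sqrt{\epsilon^{2}+|\nabla u^{\epsilon}|^{2}}\leq\sqrt{1+C^{2}}$, whence $\int_{0}^{T}\!\int_{D}|u_{t}^{\epsilon}|^{2}\,dx\,dt\leq\sqrt{1+C^{2}}\,(E^{\epsilon}(0)-E^{\epsilon}(T))$. On the right, $E^{\epsilon}(0)=\int_{D}(\sqrt{\epsilon^{2}+|\nabla g|^{2}}-\nu g)\,dx$ is bounded above in terms of $\|g\|_{C^{1}(\overline{D})}$, $\nu$ and $D$, while $E^{\epsilon}(T)\geq-|\nu|\,\|u^{\epsilon}\|_{L^{\infty}}|D|\geq-C$ after discarding the nonnegative square-root term and invoking the sup-norm bound of Lemma \ref{2.3}. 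Both estimates are independent of $\epsilon$ and $T$, so letting $T\to\infty$ delivers (\ref{2.16}) with the asserted dependence of $C$.

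The main obstacle I anticipate is the rigorous justification of the vanishing boundary term and of differentiation under the integral sign, which requires enough regularity of $u^{\epsilon}$ (in particular continuity of $u_{t}^{\epsilon}$ and $\nabla u_{t}^{\epsilon}$) up to $\partial D$. Since (\ref{2.1}) is uniformly parabolic for fixed $\epsilon>0$ with smooth coefficients and the compatibility condition $g=h$ holds on $\partial D$, the classical solution furnished by Theorem \ref{2.5} is smooth in $D\times(0,\infty)$ and regular enough up to the lateral boundary for these integrations by parts; any regularity loss at the corner $\partial D\times\{0\}$ is harmless for an $L^{2}$ integral. The genuinely conceptual step is recognizing the weighted gradient-flow identity $\int_{D}|u_{t}^{\epsilon}|^{2}/\sqrt{\epsilon^{2}+|\nabla u^{\epsilon}|^{2}}\,dx=-\frac{d}{dt}E^{\epsilon}(t)$; once this is established, uniformity in $\epsilon$ and $T$ is automatic because $E^{\epsilon}(0)-E^{\epsilon}(T)$ is controlled by the $C^{1}$ data together with the previously proved $L^{\infty}$ and gradient estimates.
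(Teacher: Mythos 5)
Your proposal is correct and is essentially the paper's own argument: the paper works with $J(t)=\int_{D}\sqrt{\epsilon^{2}+|\nabla u^{\epsilon}|^{2}}\,dx$, derives the same integration-by-parts identity (with $u_{t}^{\epsilon}=0$ on $\partial D$), keeps the term $\nu\int_{D}u_{t}^{\epsilon}\,dx$ on the right-hand side and controls it by the $L^{\infty}$ bound of Lemma \ref{2.3}, and then removes the weight $\sqrt{\epsilon^{2}+|\nabla u^{\epsilon}|^{2}}$ using the gradient bound of Lemma \ref{2.4}, exactly as you do. Folding the $-\nu u^{\epsilon}$ term into your Lyapunov functional $E^{\epsilon}$ instead of integrating it separately is only a cosmetic repackaging of the same computation.
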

\begin{proof}
Set
\begin{equation*}\label{1.03} J(t)=\int_{D}\sqrt{|\nabla
u^{\epsilon}|^{2}+\epsilon^{2}}dx .
\end{equation*}
Then
\begin{equation}\label{2.17}
J'(t)=\int_{D}\frac{\nabla u^{\epsilon}\cdot \nabla
u^{\epsilon}_{t}}{\sqrt{|\nabla
u^{\epsilon}|^{2}+\epsilon^{2}}}dx=-\int_{D}\mathrm{div}\frac{\nabla
u^{\epsilon} }{\sqrt{|\nabla
u^{\epsilon}|^{2}+\epsilon^{2}}}u^{\epsilon}_{t}dx .
\end{equation}
From (\ref{2.1}) we see that
\begin{equation}\label{2.18}
 \mathrm{div}\frac{\nabla
u^{\epsilon} }{\sqrt{|\nabla
u^{\epsilon}|^{2}+\epsilon^{2}}}=\frac{u^{\epsilon}_{t}}{\sqrt{|\nabla
u^{\epsilon}|^{2}+\epsilon^{2}}}-\nu.
\end{equation}
Substituting (\ref{2.18}) into (\ref{2.17}) we obtain
\begin{equation}\label{2.19}
J'(t)+\int_{D}\frac{|u_{t}^{\epsilon}|^{2}}{\sqrt{|\nabla
u^{\epsilon}|^{2}+\epsilon^{2}}}dx=\nu\int_{D}u_{t}^{\epsilon}dx .
\end{equation}
For (\ref{2.19}) integrating from $0$ to $T$ and using (\ref{2.5}) we have
\begin{equation*}\label{1.03}\aligned
\int^{T}_{0}\int_{D}\frac{|u_{t}^{\epsilon}|^{2}}{\sqrt{|\nabla
u^{\epsilon}|^{2}+\epsilon^{2}}}dxdt&=J(0)-J(t)+\nu\int_{D}u^{\epsilon}|_{t=T}dx-\nu\int_{D}u^{\epsilon}|_{t=0}dx\\
&\leq J(0)+C ,\endaligned
\end{equation*}
where $C$ is a constant which is independent of $\epsilon$. Taking
$T\rightarrow +\infty$ we get
\begin{equation}\label{2.20}
\int^{+\infty}_{0}\int_{D}\frac{|u_{t}^{\epsilon}|^{2}}{\sqrt{|\nabla
u^{\epsilon}|^{2}+\epsilon^{2}}}dxdt\leq C.
\end{equation}
Combining (\ref{2.7}) with (\ref{2.20}) we arrive at
\begin{equation*}\label{1.03}\aligned
\int^{+\infty}_{0}\int_{D}|u_{t}^{\epsilon}|^{2}dxdt&=\int^{+\infty}_{0}\int_{D}\frac{|u_{t}^{\epsilon}|^{2}}{\sqrt{|\nabla
u^{\epsilon}|^{2}+\epsilon^{2}}}\sqrt{|\nabla u^{\epsilon}|^{2}+\epsilon^{2}}dxdt\\
&\leq (\|\nabla u^{\epsilon}\|_{L^{\infty}(D\times[0,+\infty))}+\epsilon)\int^{+\infty}_{0}\int_{D}\frac{|u_{t}^{\epsilon}|^{2}}{\sqrt{|\nabla
u^{\epsilon}|^{2}+\epsilon^{2}}}dxdt\\&\leq C,
\endaligned
\end{equation*}
where $C$ depends only on $\|h\|_{C^{2}(\partial D)}$, $\|g\|_{C^{1}(\overline {D})}$, $H_{0}$ and $D$.
\end{proof}

\begin{corollary}\label{2.7}
Suppose $u^{\epsilon}$ is a classical solution of
(\ref{2.1}). Then there holds
\begin{equation}\label{2.21}
\|u_{t}^{\epsilon}\|_{L^{\infty}(D\times[0,+\infty))}\leq C.
\end{equation}
where $C$ depends only on $\|g\|_{C^{2}(\overline {D})}.$
\end{corollary}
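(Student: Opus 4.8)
The plan is to exploit the fact that the equation in (\ref{2.1}) is autonomous in $t$, so that for every $\tau>0$ the time-translate $u^{\epsilon}(\cdot,\cdot+\tau)$ is again a classical solution of the same equation, and then to compare it with $u^{\epsilon}$ by means of the comparison principle Lemma \ref{2.2} (applied with $\sigma=1$, which is exactly the operator in (\ref{2.1})). Two elementary observations drive the argument: the operator depends only on $\nabla u$, $D^{2}u$ and $u_{t}$, so it annihilates additive constants, i.e. $u+c$ solves (\ref{2.1}) whenever $u$ does; and establishing the Lipschitz-in-time bound $|u^{\epsilon}(x,t+\tau)-u^{\epsilon}(x,t)|\leq K\tau$ for all $t\geq0$, $\tau>0$, with $K$ independent of $\epsilon$ and $T$, immediately yields $|u^{\epsilon}_{t}|\leq K$ upon dividing by $\tau$ and letting $\tau\to0$. (One could instead differentiate (\ref{2.3}) in $t$ and apply the parabolic maximum principle to $v=u_{t}$, as in Step 2 of Lemma \ref{2.4}; but that forces one to control the regularity of $u_{t}$ up to the corner $\partial D\times\{0\}$, which the comparison argument sidesteps.)

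The first real step is an initial-layer estimate. Set
$$K=\sup_{\overline{D}}\left|\left(\delta_{ij}-\frac{g_{i}g_{j}}{\epsilon^{2}+|\nabla g|^{2}}\right)g_{ij}+\nu\sqrt{\epsilon^{2}+|\nabla g|^{2}}\right|,$$
which, since $\epsilon<1$ and the matrix $\delta_{ij}-g_{i}g_{j}/(\epsilon^{2}+|\nabla g|^{2})$ has operator norm $\leq1$, is bounded by a constant depending only on $n$, $\nu$ and $\|g\|_{C^{2}(\overline{D})}$. A direct computation gives, for the operator $L$ of (\ref{2.1}),
$$L(g\pm Kt)=\pm K-\left[\left(\delta_{ij}-\frac{g_{i}g_{j}}{\epsilon^{2}+|\nabla g|^{2}}\right)g_{ij}+\nu\sqrt{\epsilon^{2}+|\nabla g|^{2}}\right],$$
so by the choice of $K$ the function $g+Kt$ is a supersolution and $g-Kt$ a subsolution. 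Because $g=h$ on $\partial D$, on the parabolic boundary one has $g+Kt\geq h=u^{\epsilon}$ on $\partial D\times[0,T)$ and $g+Kt=g=u^{\epsilon}$ on $D\times\{0\}$, and symmetrically for $g-Kt$. Lemma \ref{2.2} then gives $g(x)-Kt\leq u^{\epsilon}(x,t)\leq g(x)+Kt$, and in particular $|u^{\epsilon}(x,\tau)-g(x)|\leq K\tau$ for all $x\in\overline{D}$. It is exactly here that the compatibility hypothesis $g=h$ on $\partial D$ enters, and it is this that makes $K$, hence the final bound, depend only on $\|g\|_{C^{2}}$ and not on $\|h\|$.

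Finally I would propagate the estimate in time. Fix $\tau>0$ and put $w_{1}(x,t)=u^{\epsilon}(x,t+\tau)$ and $w_{2}^{\pm}(x,t)=u^{\epsilon}(x,t)\pm K\tau$; all three are classical solutions of (\ref{2.1}) on $D_{T}$. On $\partial D\times[0,T)$ they equal $h(x)$ and $h(x)\pm K\tau$ respectively, so $w_{2}^{-}\leq w_{1}\leq w_{2}^{+}$ there; on $D\times\{0\}$ the same ordering is precisely the initial-layer bound $g-K\tau\leq u^{\epsilon}(\cdot,\tau)\leq g+K\tau$. Applying Lemma \ref{2.2} twice on $D_{T}$ gives $|u^{\epsilon}(x,t+\tau)-u^{\epsilon}(x,t)|\leq K\tau$ for every $t\in[0,T-\tau)$, and since $K$ is independent of $T$ this holds for all $t\geq0$. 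Dividing by $\tau$ and letting $\tau\to0$, using $u^{\epsilon}\in C^{\infty}(D\times(0,+\infty))$, yields $\|u^{\epsilon}_{t}\|_{L^{\infty}(D\times[0,+\infty))}\leq K\leq C$ with $C=C(n,\nu,\|g\|_{C^{2}(\overline{D})})$. I expect the only delicate point to be the initial-layer estimate of the second paragraph; once the two-sided barrier $g\pm Kt$ is in hand, the remainder is a routine application of the comparison principle to time-translates and additive constants.
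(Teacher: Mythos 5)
Your proposal is correct, but it takes a genuinely different route from the paper's. The paper differentiates (\ref{2.1}) with respect to $t$: setting $\omega=u_{t}^{\epsilon}$, it obtains a linear parabolic equation $\omega_{t}-a^{kl}\omega_{kl}-b^{l}\omega_{l}=0$, observes that $\omega=0$ on $\partial D\times[0,+\infty)$ (the boundary data $h$ is time-independent) and that at $t=0$ the equation itself gives $\omega=\sqrt{\epsilon^{2}+|\nabla g|^{2}}\,(\mathrm{div}(\nabla g/\sqrt{\epsilon^{2}+|\nabla g|^{2}})+\nu)$, and then concludes by the maximum principle for linear parabolic equations. You replace the differentiated equation by the time-translation trick: the barriers $g\pm Kt$ control the initial layer, and comparison of $u^{\epsilon}(\cdot,\cdot+\tau)$ against $u^{\epsilon}\pm K\tau$ propagates the Lipschitz-in-time bound, which you then convert into the pointwise bound on $u_{t}^{\epsilon}$. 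The two arguments yield exactly the same constant --- your $K$ is precisely the supremum of the paper's initial datum for $\omega$ --- and both exploit the compatibility $g=h$ on $\partial D$ together with the time-independence of the lateral data. What your version buys is robustness: it uses only $u^{\epsilon}\in C^{\infty}(D\times(0,+\infty))\cap C(\overline{D}\times[0,+\infty))$ and the comparison principle of Lemma \ref{2.2}, exactly what Theorem \ref{2.5} supplies, whereas the paper's application of the maximum principle to $\omega$ tacitly requires $u_{t}^{\epsilon}$ to be controlled up to the parabolic boundary, including the corner $\partial D\times\{0\}$, where $\omega$ is in general discontinuous unless the first-order compatibility condition (vanishing of the elliptic operator applied to $g$ on $\partial D$) holds --- a point the paper does not address and your argument sidesteps entirely. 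The cost is length: the paper's proof is a few lines once the differentiation is accepted, while yours needs the extra barrier construction and two applications of Lemma \ref{2.2}.
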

\begin{proof}
Set $\omega=u^{\epsilon}_{t}$. Differentiating (\ref{2.1}) with respect to
$t$ we get
\begin{equation*}\label{1.03}
\omega_{t}-a^{kl}\omega_{kl}-b^{l}\omega_{l}=0,\quad (x,t)\in
D\times(0,+\infty),
\end{equation*}
where
$$a^{kl}=\delta_{kl}-\frac{u^{\epsilon}_{k}u^{\epsilon}_{l}}{\epsilon^{2}+|\nabla u^{\epsilon}|^{2}},$$
$$b^{l}=\frac{2u^{\epsilon}_{k}u^{\epsilon}_{m}u^{\epsilon}_{km}u^{\epsilon}_{l}}{(\epsilon^{2}+|\nabla u^{\epsilon}|^{2})^{2}}
-\frac{2u^{\epsilon}_{k}u^{\epsilon}_{kl}}{\epsilon^{2}+|\nabla
u^{\epsilon}|^{2}}-\frac{\nu u^{\epsilon}_{l}}{\sqrt{\epsilon^{2}+|\nabla u^{\epsilon}|^{2}}} .$$
From $u^{\epsilon}|_{\partial D\times[0,T)}=h(x)$, there holds
\begin{equation*}\label{1.03}
\omega=0 ,\quad (x,t)\in \partial D\times [0,+\infty).
\end{equation*}
 From (\ref{2.1}) we see that
\begin{equation*}\label{1.03}
\omega=\sqrt{\epsilon^{2}+|\nabla g|^{2}}\cdot\left(\mathrm{div}\left(\frac{\nabla
g}{\sqrt{\epsilon^{2}+|\nabla
g|^{2}}}\right)+\nu\right),\quad (x,t)\in
D\times \{0\}.
\end{equation*}
This yields (\ref{2.21}) by using maximum principle .
\end{proof}

\section{The proof of main results}

In the third section, we give the proof of Theorem \ref{1.4}, Corollary
\ref{1.5} and Theorem \ref{1.7}.

$\mathbf{Proof \, of \, Theorem \ref{1.4}.}$ \ \
Consider  the classical solution of approximate problem
(\ref{2.1}). From Theorem 2.5 and Corollary \ref{2.7} we see that there exists
$\{\epsilon_{i}\}|^{+\infty}_{i=1}$ satisfying
$\displaystyle\lim_{i\rightarrow +\infty}\epsilon_{i}=0$ such that
there holds
\begin{equation*}\label{1.03}
u^{\epsilon_{i}}\rightarrow u ,\quad\qquad\qquad \mathrm{in}\quad
C(\overline {D}\times [0,+\infty)),
\end{equation*}
\begin{equation*}\label{1.03}
\,\,\,\nabla u^{\epsilon_{i}}\rightharpoonup \nabla u\qquad\qquad
\mathrm{in} \quad L^{\infty}(D\times [0,+\infty)) ,
\end{equation*}
\begin{equation*}\label{1.03}
 \,\,\,u_{t}^{\epsilon_{i}}\rightharpoonup
 u_{t}\qquad\qquad\quad\,\,
\mathrm{in} \quad L^{\infty}(D\times [0,+\infty)) .
\end{equation*}
Combining Corollary \ref{2.6} with Fatou's Lemma we verify  that $u$
satisfies (\ref{1.6})--(\ref{1.8}). On the other hand, by the stability
theorem of viscosity solutions (cf. Theorem 2.4 in \cite{CGG}) $u$ is a
viscosity solution of (\ref{1.4}). This completes the proof of Theorem \ref{1.4}.

$\mathbf{Proof \, of \, Corollary \,\ref{1.5}.}$ \ \
The main idea comes from Y.Giga, M.Ohnuma and M.Sato (cf. \cite{GOS}).

Consider the viscosity solution $u$ of (\ref{1.4}). For $(x,t)\in
 \overline{D_{1}}\triangleq\overline {D}\times [0,1],$ set
\begin{equation*}\label{1.03}
u_{k}(x,t)=u(x,k+t) ,\quad k=1,2,\cdots.
\end{equation*}
From (\ref{1.7}) and the Ascoli-Arzela's Theorem, there exists a
subsequence of $\{u_{k}\}$ (still denote the subsequence by $\{u_{k}\}$) and the function $v(x,t)$,
such that
\begin{equation}\label{3.1}
\lim_{k\rightarrow +\infty}u_{k}(x,t)=v(x,t) ,\quad \mathrm{in}\quad
C(\overline{D_{1}}).
\end{equation}
By (\ref{1.8}) we obtain
\begin{equation*}\label{1.03}
\lim_{k\rightarrow +\infty}\int^{1}_{0}\int_{D}|u_{kt}|^{2}dxdt=\lim_{k\rightarrow +\infty}\int^{k+1}_{k}\int_{D}|u_{t}|^{2}dxdt=0.
\end{equation*}
Letting  $k\rightarrow +\infty$  we have
\begin{equation}\label{3.2}
u_{kt}\rightharpoonup 0,\quad \mathrm{in}\quad L^{2}(D_{1}) .
\end{equation}
It follows from (\ref{3.1}) and (\ref{3.2}) that for any $\phi\in C^{\infty}_{0}(D)$  and $\chi\in
C^{\infty}_{0}(0,1)$,
\begin{equation*}\label{1.03}
\int_{D}\int^{1}_{0}v\phi\chi_{t} dtdx=0.
\end{equation*}
Then
\begin{equation}\label{3.3}
v_t=0,\quad (x,t)\in D_1.
\end{equation}

By (\ref{1.4}) $u_{k}$ satisfies the following equation in viscosity sense
\begin{equation}\label{3.4}
\left\{ \begin{aligned}
u_{kt}-|\nabla u_{k}|\left(\mathrm{div}\biggl(\frac{\nabla u_{k}}{|\nabla
u_{k}|}\biggr)+\nu \right)&=0, &(x,t)\in D_1\times(0,1), \\
 u_{k}&=h(x), &(x,t)\in \partial D_1\times [0,1].
 \end{aligned}\right.
\end{equation}
 From (\ref{3.4})  taking $k\rightarrow +\infty$  and
 using (\ref{1.7}),(\ref{3.1}),(\ref{3.3}) and applying Theorem 2.4 in \cite{CGG} we deduce that  $v$ satisfies
\begin{equation*}\label{1.03}
\left\{ \begin{aligned}
-|\nabla v|\left(\mathrm{div}\biggl(\frac{\nabla v}{|\nabla v|}\biggr)+\nu\right)&=0, &x\in D, \\
v&=h(x), &x\in \partial D,
\end{aligned} \right.
\end{equation*}
in viscosity sense.
This completes the proof of Corollary \ref{1.5}.

$\mathbf{Proof \, of \, Theorem \, \ref{1.7}.}$ \ \
Firstly taking positive constant $\delta$ to be small enough we can
construct a pair of  non-decreasing $C^{2}$ functions $g^{+}(\tau)$
and $g^{-}(\tau)$ such that
\begin{equation}\label{3.5}
\left\{ \begin{aligned}
&g^{-}(x_{n+1})=g^{+}(x_{n+1})=\lambda, &x_{n+1}\geq m+\delta,\\
&g^{-}(x_{n+1})\leq\max_{x'\in \overline{D}}g(x',x_{n+1})\leq g^{+}(x_{n+1}), &x_{n+1}\leq m+\delta.\\
\end{aligned} \right.
\end{equation}
In fact, by the hypothesis of $g$ we can choose $g^{+}(\tau)\equiv\lambda$. Set
$$g_{\varepsilon}(\tau)=\left\{ \begin{aligned}
&\lambda, &\mathrm{if}\quad\tau\geq m+\delta, \\
&\frac{\lambda}{\varepsilon}(\tau-m-\delta+\varepsilon), &\mathrm{if}\quad\tau\leq m+\delta.
\end{aligned} \right.$$
 By  smoothing  the point $(m+\delta,\lambda)$ and letting
$\epsilon=\epsilon(\delta)$ to be small enough  we obtain $g^{-}(\tau)$ which
satisfies (\ref{3.5}).

Let
\begin{equation*}\label{1.03}
u^{+}(x',x_{n+1},t)=g^{+}(x_{n+1}+\nu t),
\end{equation*}
\begin{equation*}\label{1.03}
u^{-}(x',x_{n+1},t)=g^{-}(x_{n+1}).
\end{equation*}
 We claim that $u^{+}(x',x_{n+1},t)$ and $u^{-}(x',x_{n+1},t)$ are
viscosity sub-solution and viscosity super-solution of (\ref{1.4}) respectively. Then
using Theorem 1.7 in \cite{IS}, $\mathrm{i}.\mathrm{e}$, the comparison
principle for the viscosity solution of (\ref{1.4}) which is likely to
Lemma \ref{2.2}, we get
\begin{equation}\label{3.6}
u^{-}(x',x_{n+1},t)\leq u(x',x_{n+1},t)\leq
u^{+}(x',x_{n+1},t),\quad (x',x_{n+1},t)\in \overline {D}\times
[0,+\infty).
\end{equation}
In particular, if $x_{n+1}\geq m+\delta$ for any $\delta>0$, then
$u^{+}(x',x_{n+1},t)=u^{-}(x',x_{n+1},t)\equiv \lambda$ by making use of
(3,5). Taking $\delta\rightarrow 0$ we obtain (\ref{1.11}).

Now we prove that $u^{+}(x',x_{n+1},t)$ is viscosity
super-solution of (\ref{1.4}). In a similar way we can prove that
$u^{-}(x',x_{n+1},t)$ is a viscosity sub-solution of (\ref{1.4}).

In fact, for any $(x,t)\in D\times[0,+\infty)$, if $\varphi\in
C^{\infty}(D\times[0,+\infty))$ and there exists a
neighborhood ${\Theta}$ of $(x,t)$ in $D\times[0,+\infty)$
such that
\begin{equation*}
(u^{+}-\varphi)(x,t)=\min_{\overline{\Theta}}(u^{+}-\varphi).
\end{equation*}
Then at $(x,t)$ we have
\begin{equation}\label{3.7}
u^{+}_{t}-\varphi_{t}\leq 0,\quad \nabla u^{+}=\nabla \varphi,\quad
D^{2}u^{+}\geq D^{2}\varphi.
\end{equation}
If $\nabla \varphi=0$. Then by taking
$\eta=(\eta_{1},\eta_{2},\cdots,\eta_n,\eta_{n+1})=(0,0,\cdots,0,1)$ and using
(\ref{3.7}) we obtain
\begin{equation}\label{3.8}
(\delta_{ij}-\eta_{i}\eta_{j})\varphi_{ij}\leq
(\delta_{ij}-\eta_{i}\eta_{j})u^{+}_{ij}=(1-\eta_{n+1}\eta_{n+1})u^{+}_{n+1,n+1}=0.
\end{equation}
By (\ref{3.7}) and (\ref{3.8}) there holds
\begin{equation*}
\varphi_{t}\geq u^{+}_{t}=\nu(g^+)'=\nu\varphi_{n+1}=0\geq
(\delta_{ij}-\eta_{i}\eta_{j})\varphi_{ij}.
\end{equation*}
On the other hand, if $\nabla \varphi\neq 0$. Then by (\ref{3.7}) we get
\begin{equation}\label{3.9}
\varphi_{i}=u^{+}_{i}=0,\quad u^{+}_{ii}=0, \quad i=1,2,\cdots,n,\quad \varphi_{n+1}=u^{+}_{n+1}\neq 0.
\end{equation}
Combining  (\ref{3.7}) with (\ref{3.9}), we obtain
\begin{equation}\label{3.10}
\left(\delta_{ij}-\frac{\varphi_{i}\varphi_{i}}{|\nabla\varphi|^{2}}\right)\varphi_{ij}=\sum^{n}_{i=1}\varphi_{ii}\leq
\sum^{n}_{i=1}u^{+}_{ii}=0,
\end{equation}
\begin{equation}\label{3.11}
\varphi_{t}\geq u^{+}_{t}=\nu\varphi_{n+1}.
\end{equation}
It follows from (\ref{3.10}) and (\ref{3.11}) that
\begin{equation*}
\left(\delta_{ij}-\frac{\varphi_{i}\varphi_{i}}{|\nabla\varphi|^{2}}\right)\varphi_{ij}+\nu|\nabla
\varphi|\leq \varphi_{t}.
\end{equation*}

So we conclude that $u^{+}(x',x_{n+1},t)$ is viscosity super-solution of (\ref{1.4}). This
completes the proof of Theorem \ref{1.7}.

{\bf Acknowledgements.} This work is supported by the National
Natural Science Foundation of China (10671022) and Doctoral
Programme Foundation of Institute of Higher Education of China
(20060027023).

\end{document}